\documentclass[oneside,a4paper]{amsart}

\usepackage{etex}
\usepackage{xcolor}
\usepackage[utf8]{inputenc}
\usepackage[T1]{fontenc}
\makeatletter
\def\alloc@#1#2#3#4#5%
{\ifnum\count1#1<#4% make sure there's still room
	\allocationnumber\count1#1
	\global\advance\count1#1\@ne
	\global#3#5\allocationnumber
	\wlog{\string#5=\string#2\the\allocationnumber}%
	\else\ifnum#1<6
	\def\etex@dummy@definition{}% <-- code added
	\begingroup \escapechar\m@ne
	\expandafter\alloc@@\expandafter{\string#2}#5%
	\else\errmessage{No room for a new #2}\fi\fi
}
\makeatother
\usepackage[thinlines,thiklines]{easybmat}
\usepackage{amsfonts}

\usepackage[all,cmtip]{xy}
\usepackage{amsmath}
\usepackage{amsthm}
\usepackage{amssymb}
\usepackage{enumitem}
\usepackage{booktabs}
\usepackage{tikz}
\usepackage{tikz-cd}
\usetikzlibrary{calc,arrows,backgrounds,decorations.markings, shapes.misc}
\usepackage{longtable}
\usepackage{float}
\usepackage{floatflt}

\usepackage[backref=page]{hyperref}
\usepackage{hyperref}
\hypersetup{pdfauthor={Cong Ding}}
\hypersetup{colorlinks=true, linkcolor=red,anchorcolor=red,citecolor=blue}

\theoremstyle{plain}

\newtheorem{theorem}{\bf Theorem}[section]
\newtheorem{definition}[theorem]{\bf Definition}

\newtheorem{lemma}[theorem]{\bf Lemma}
\newtheorem{proposition}[theorem]{\bf Proposition}
\newtheorem{prop-def}[theorem]{\bf Proposition-Definition}

\newtheorem{remark}[theorem]{\bf Remark}

\newtheorem*{theorem*}{Theorem}

% Separate numbering for Theorem statements in the introduction

\newcommand{\im}{{\text {Im}}}
\newcommand{\Span}{{\text {span}}}
\newcommand{\Aut}{{\text {Aut}}}
\newcommand{\Pic}{{\text {Pic}}}

\begin{document}
		\tikzset{->-/.style={decoration={
				markings,
				mark=at position .5 with {\arrow{>}}},postaction={decorate}}}
	
	\tikzset{cross/.style={cross out, draw=black, minimum size=15*(#1-\pgflinewidth), inner sep=0pt, outer sep=0pt},
		%default radius will be 1pt. 
		cross/.default={1pt}}
	\title[Rigidity of admissible pairs]{On the rigidity of admissible pairs of irreducible Hermitian symmetric spaces which are of deletion type}
	\author{Cong Ding}
	\address{Morningside Center of Mathematics, Academy of Mathematics \& Systems Science, The Chinese Academy of Sciences, Beijing, China}
	\email{congding@amss.ac.cn}
	\subjclass[2010]{14M17, 14M20 \and 32M10}
	\date{\today}
	\maketitle
		\begin{abstract} 
			The notion of admissible pairs of rational homogeneous spaces of Picard number one and their rigidity in terms of the geometric substructures was formulated by Mok and Zhang. The rigidity is known for subdiagram type. While when the admissible pair is of deletion type, the rigidity no longer holds and we need additional conditions. Mok gave a general criterion to handle the admissible pairs which are not of subdiagram type.
			In this short note we give some examples of admissible pairs of irreducible compact Hermitian symmetric spaces of deletion type to be rigid under the assumption of rational saturation, as an application of Mok's theorem.
		\end{abstract}
\section{Introduction}
Mok and Zhang \cite{MR3960268} introduced the notion of admissible pairs of rational homogeneous spaces of Picard number one and their rigidity in terms of the geometric substructures. In \cite{MR3960268}, it was concluded that the admissible pair $(X_0, X)$ of subdiagram type is rigid when $X_0$ is not linear, by checking the non-degeneracy of substructures and doing parallel transport on the VMRT. However, according to the classification of admissible pairs in Hermitian type given by Zhang \cite{Zhang2014}, there are admissible pairs which are not of subdiagram type, a typical example is $(Q^m, Q^n)$ embedded in a standard way. In those cases, non-rigidity appears since the non-degeneracy condition does not hold, see for example \cite{Zhang2014}, \cite{Zhang2015}. One reason for the failure of rigidity is that a germ $S\subset X$ with substructure modeled on $(X_0, X)$  may not be saturated by open subsets of minimal rational curves of $X$. In \cite{MR4028278}, Mok gives a sufficient condition for admissible pairs of Hermitian type to be rigid, under the natural additional assumption of rational saturation. More precisely, he proved the following.
\begin{theorem}[Theorem 3.1 in \cite{MR4028278}]\label{Mokthmsaturation}
	Let $(X_0, X)$ be an admissible pair of irreducible Hermitian symmetric space of compact type with rank $\geq 2$, $0\subset X_0\subset X$ is the base point. Suppose that 
	\begin{enumerate}
		\item
	the pair of VMRTs $(\mathcal{C}_0(X_0), \mathcal{C}_0(X))$ satisfies Condition (F);
	
	\item the restriction map $r: \Gamma(X, T_X)\rightarrow \Gamma(X_0, T_{X|X_0})$ is surjective; 
	\end{enumerate}
Let $W\subset X$ be an open subset in the complex topology and assume that $S\subset W$ inherits by tangential intersection a sub-VMRT structure $\overline{\omega}: \mathcal{C}(S)\rightarrow S$ modeled on $(X_0, X)$, where $\mathcal{C}(S):=\mathcal{C}(X)\cap \mathbb{P}T(S)$. Assume more that $S\subset W$ is rational saturated, i.e. any germ of minimal rational curve $\ell$ of $X$ passing $0$ with $T_0(\ell)\subset T_0(S)$ must lie in $S$. Then, there exists $\gamma \in \text{Aut}(X)$ such that $S$ is an open subset of $\gamma(X_0)$.
\end{theorem}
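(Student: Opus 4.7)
The plan is to combine the sub-VMRT propagation machinery with the deformation-theoretic content of the surjectivity of $r$. Concretely, I would first use Condition (F) and rational saturation to enlarge the germ of $S$ at $0$ to a closed irreducible subvariety $Z \subset X$ whose pointwise geometry matches the model $X_0 \subset X$, and then use the surjectivity of $r$ to realize $Z$ as $\gamma(X_0)$ for some $\gamma \in \Aut(X)$.

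At the base point, the hypothesis gives $T_0(S) = T_0(X_0)$ and $\mathcal{C}_0(S) = \mathcal{C}_0(X_0)$ as projective subvarieties of $\mathbb{P}T_0(X)$. Rational saturation implies that every minimal rational curve of $X$ tangent to $S$ at $0$ lies entirely in $S$, so an open neighborhood of $0$ in $S$ is swept out by such curves. At every nearby $x \in S$, Condition (F) forces the pair $(\mathcal{C}_x(S),\mathcal{C}_x(X))$ to be projectively equivalent to the model pair, yielding a flat family of sub-VMRTs along $S$ compatible with parallel transport. Iterating the construction of adjoining all minimal rational curves of $X$ tangent to the current sub-VMRT, and propagating the germ by parallel transport along chains of such curves, one obtains a monotone enlargement of a neighborhood of $0$ in $S$ that terminates in a closed irreducible subvariety $Z \subset X$ of dimension $\dim X_0$ whose germ at every smooth point is biholomorphic to the model germ of $X_0 \subset X$. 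The hard part is ensuring that this iterated parallel transport is single-valued and closes up globally without monodromy; this is precisely where Condition (F) must be used in its strongest form, together with the algebraicity coming from rational saturation, to guarantee that the local flat identifications of sub-VMRTs glue to a well-defined global object.

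Finally, I would identify $Z$ with some $\gamma(X_0)$ via deformation theory. The normal bundle sequence $0 \to T_{X_0} \to T_X|_{X_0} \to N_{X_0/X} \to 0$ shows that infinitesimal deformations of $X_0$ inside $X$ are controlled by $H^0(X_0, N_{X_0/X})$, and the surjectivity of $r : \Gamma(X, T_X) \to \Gamma(X_0, T_X|_{X_0})$ implies that every global section of $N_{X_0/X}$ is the image of a global holomorphic vector field on $X$, hence comes from an infinitesimal automorphism. Integrating along a one-parameter family of subvarieties joining $X_0$ to $Z$ then yields $\gamma \in \Aut(X)$ with $Z = \gamma(X_0)$, and consequently $S$ is an open subset of $\gamma(X_0)$ as asserted.
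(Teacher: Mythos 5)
Your overall two-stage outline --- first algebraize the germ $S$ into a projective subvariety $Z$ using Condition (F) and rational saturation, then use the surjectivity of $r$ to identify $Z$ with a translate of $X_0$ --- matches the structure of Mok's argument as sketched in the paper. However, the final step contains a genuine gap. You write that one concludes by ``integrating along a one-parameter family of subvarieties joining $X_0$ to $Z$,'' but the existence of such a family is precisely the hard point, and nothing in your argument produces it. A priori the algebraization step only gives a projective subvariety $Z \supset S$ together with a degree-one map $h\colon X_0 \to X$ from the normalization $\widetilde{Z}\cong X_0$ (identified via the recognition theorem of Hwang--Mok); it does not place $Z$ in any deformation family of $X_0$, nor does it show $Z$ is smooth. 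The surjectivity of $r$ only controls \emph{infinitesimal} deformations of $X_0$ in $X$, so it can only be brought to bear once one knows that $Z$ is a small deformation of $X_0$.

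The paper supplies the missing bridge in two steps that your proposal omits. First, one uses the $\mathbb{C}^*$-action in Harish--Chandra coordinates to form $Z_t := T_{1/t}(Z)$ and shows $Z_t \to X_0$ as $t\to 0$; this relies on the volume identity $\mathrm{Volume}(\mathrm{Reg}(Z_t)) = \mathrm{Volume}(X_0,\omega|_{X_0})$, which in turn uses the degree-one property $h^*(\mathcal{O}(1))\cong\mathcal{O}(1)$ to rule out extra cycle components supported on the boundary divisor in the limit. Second, one must show $h$ is actually an embedding (equivalently, $Z$ is smooth); this is done via semicontinuity of Lelong numbers, since a singular point of $Z$ would force a point of $X_0$ to have Lelong number $\geq 2$, contradicting smoothness. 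Only after these two steps does one have a genuine one-parameter family of smooth deformations of $X_0$, at which point the surjectivity of $r$ (via $\Gamma(X_0, N_{X_0|X})$ being generated by restrictions of global vector fields) shows each nearby $Z_t$ equals $\gamma_t(X_0)$. Your assertion that the germ of $Z$ at every smooth point is biholomorphic to the model germ also overstates what the propagation yields: the recognition theorem identifies the abstract normalization $\widetilde{Z}$ with $X_0$, not the embedded germs of $Z$ in $X$.
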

The theorem provides sufficient conditions to check the rigidity under the assumption of rational saturation.	In \cite{MR4028278}, examples $(Q^m, Q^n)$, $(G^{III}(n,n), G(n,n))$ are checked. In this note, we furtherly check a class of examples satisfying the two conditions in Theorem \ref{Mokthmsaturation}, where $(X_0 ,X)$ is of deletion type (see Definition \ref{deletion}, from the classification given in \cite{Zhang2014}) and maximal. We have

\begin{theorem}\label{thmdeletion}
		Let $(X_0, X)$ be an admissible pair of irreducible Hermitian symmetric spaces of compact type which are of deletion type. Moreover assume that $(X_0, X)$ is maximal, i.e. it is not a composition of two admissible pairs of deletion type. Then two conditions in Theorem \ref{Mokthmsaturation} are satisfied and $(X_0, X)$ is rigid under the assumption of rational saturation.
\end{theorem}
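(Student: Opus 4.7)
The plan is to verify the two hypotheses of Theorem~\ref{Mokthmsaturation} case by case, working from Zhang's classification of admissible pairs of Hermitian type in \cite{Zhang2014}. I would first write down all maximal admissible pairs $(X_0, X)$ of deletion type, which (modulo the sub\-diagram side of the classification) is a short list including $(Q^m, Q^n)$, $(G^{III}(n,n), G(n,n))$, and a handful of further exceptional embeddings. Because the two conditions in Theorem~\ref{Mokthmsaturation} are already known to hold for the first two cases by \cite{MR4028278}, the substance of the proof is to handle the remaining maximal deletion-type pairs.

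For condition (1), the pair of VMRTs $(\mathcal{C}_0(X_0), \mathcal{C}_0(X))$ is itself a pair of (products of) rational homogeneous spaces, and in each case of the classification it happens to be of subdiagram type---this is the fundamental reason one expects Condition (F) to hold after deletion. I would identify $\mathcal{C}_0(X_0) \hookrightarrow \mathcal{C}_0(X)$ explicitly as a projective subvariety of the projectivised model tangent space, then invoke the known results on non-degeneracy of sub-VMRT structures of sub\-diagram type (as used in \cite{MR3960268}) to conclude that Condition (F) is satisfied. The point is that the maximality assumption is exactly what prevents pathological intermediate geometries: if $(X_0, X)$ were a composition of two deletion moves one might pick up a reducible or degenerate VMRT pair, but maximality keeps the VMRT pair clean.

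For condition (2), the surjectivity of $r: \Gamma(X, T_X) \to \Gamma(X_0, T_{X|X_0})$, I would use the ideal sheaf sequence $0 \to \mathcal{I}_{X_0} \otimes T_X \to T_X \to T_{X}|_{X_0} \to 0$ and reduce the problem to the cohomological vanishing $H^1(X, \mathcal{I}_{X_0} \otimes T_X) = 0$. Since $X$ is a rational homogeneous $G/P$ and $T_X$ is an irreducible homogeneous vector bundle, I would try to filter $\mathcal{I}_{X_0} \otimes T_X$ by homogeneous subquotients along $X_0$ and apply Bott--Borel--Weil on each piece, or, failing that, compute directly the graded pieces of $\Gamma(X_0, T_{X|X_0})$ as $\mathfrak{g}$-modules and compare with $\Gamma(X, T_X) = \mathfrak{g}$. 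In the small-rank cases one may also argue geometrically via the existence of a reductive subgroup $L \subset \Aut(X)$ containing $\Aut^0(X_0)$ with $L \cdot X_0 = X$.

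The main obstacle I expect is the cohomological part of condition (2): although the pattern of vanishing is uniform, actually pinning down the Bott--Borel--Weil data for each deletion-type embedding requires careful bookkeeping of the relative weights of $T_X|_{X_0}$ with respect to the Levi of the parabolic stabilising $X_0$, and ruling out stray non-trivial cohomology classes not coming from $\mathfrak{g}$. A secondary difficulty is verifying that each case in Zhang's list is genuinely new (not already implicit in \cite{MR4028278}) and that the geometric interpretation of Condition (F) on the VMRT pair is transparent enough to close the argument without a lengthy detour.
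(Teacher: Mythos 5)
Your plan for Condition (F) rests on a false premise. You assert that for a deletion-type pair the VMRT pair $(\mathcal{C}_0(X_0), \mathcal{C}_0(X))$ ``happens to be of subdiagram type,'' and that one can then invoke the non-degeneracy results of \cite{MR3960268}. Both halves fail. First, the VMRT pair of a maximal deletion-type pair is again of deletion type, not of subdiagram type: taking VMRTs sends $(E_6/P_6, E_7/P_7)$ to $(G^{II}(5,5), E_6/P_6)$, then to $(G(2,3), G^{II}(5,5))$, then to $(\mathbb{P}^1\times\mathbb{P}^2, G(2,3))$ and finally to $(\{pt\}\cup\mathbb{P}^1,\ \mathbb{P}^1\times\mathbb{P}^2)$ --- none of these is a subdiagram pair, and the paper's whole strategy for Condition (F) is built on exactly this stability of the deletion property under taking VMRTs (an explicit fitting lemma for $\{pt\}\cup\mathbb{P}^1\subset\mathbb{P}^1\times\mathbb{P}^2$ as the base of an induction that lifts Condition (F) from the VMRT pair back to the pair itself). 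Second, non-degeneracy for substructures is not available here: the paper proves that every deletion-type pair is \emph{degenerate} for substructures, which is precisely why the rational-saturation hypothesis is needed in the first place. Moreover, non-degeneracy is the tool used in \cite{MR3960268} to get parallel transport and saturation; it is not what Condition (F) asserts, which is a fitting statement about arbitrary linear-section embeddings of $\mathcal{C}_0(X_0)$ into $\mathcal{C}_0(X)$ being standard. So the core of part (1) of your argument would not go through, and the role you assign to maximality (``keeping the VMRT pair clean'') is not the role it actually plays. (The paper also exhibits a genuine counterexample to Condition (F) for a non-maximal composite, namely $\{pt\}\cup\mathbb{P}^1$ sitting in $G(2,3)$, which shows the fitting statement is delicate and cannot be waved through.)

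For Condition (2) you correctly identify the reduction to surjectivity onto $\Gamma(X_0, N_{X_0|X})$, but you leave the cohomological work as an acknowledged obstacle, whereas the paper avoids Bott--Borel--Weil bookkeeping almost entirely. The key input is a geometric decomposition $N_{X_0|X}\cong \mathcal{O}_{X_0}(1)\oplus\mathcal{E}$ with $\mathcal{E}$ an irreducible $P_0$-homogeneous bundle; this uses maximality through the identification of $X_0$ with the locus at infinity (in a Harish--Chandra coordinate) of the minimal rational curves through the base point, and a $\mathbb{C}^*$-family of linear sections of the cone $\mathcal{V}_x$ producing the $\mathcal{O}_{X_0}(1)$-summand. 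Since the two summands are non-isomorphic (for $X$ not a hyperquadric), $\Gamma(X_0, N_{X_0|X})$ is a sum of two non-isomorphic irreducible $G_0$-modules, and one-parameter families in $G$ give sections projecting nontrivially to each factor; Schur's lemma then yields surjectivity. If you want to salvage your route via $H^1(X,\mathcal{I}_{X_0}\otimes T_X)=0$ you would have to actually carry out the weight computation for each of the exceptional embeddings, which is exactly the bookkeeping the paper's argument is designed to bypass.
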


When $(X_0, X)$ is not maximal, we can find counterexamples violating the Condition (F), so these sufficient conditions may not always work. The checking of Condition (F) depends on the fact that $(\mathcal{C}_0(X_0), \mathcal{C}_0(X))$ is of deletion type if $(X_0, X)$ is of deletion type. The checking of the surjectivity of the restriction map may need the Borel-Weil-Bott theorem in general, however we have a simple observation in the case when $(X_0, X)$ is maximal.

The note is organized as follows. In Section \ref{preli} we recall some prelimiaries; In Section \ref{sketch} we give a sketch of Mok's proof of Theorem \ref{Mokthmsaturation}. In Section \ref{checkingconditionF} and Section \ref{checkingsujectivity} we check the two conditions in Theorem \ref{Mokthmsaturation}
respectively. In section \ref{counterexample} we give a counterexample violating the Condition (F) when $(X_0, X)$ is not maximal.

\section*{Acknowledgement}
I would like to thank Prof. Ngaiming Mok for his insightful guidance. I am also grateful to Prof. Baohua Fu and Feng Shao for some helpful discussions.

\section{Preliminaries}\label{preli}
We collect some notions and facts from \cite{Zhang2014}, \cite{MR3960268}, \cite{MR4028278}.
We first give the definition of admissible pairs.

\subsection{Admissible pairs of deletion type}
\begin{definition}[admissible pairs]
Let $X_0$ and $X$ be rational homogeneous space of Picard number one, and $i: X_0\hookrightarrow X$ be a holomorphic embedding equivariant with respect to a homomorphism of complex Lie groups $\Phi: \Aut_0(X_0)\rightarrow \Aut_0(X)$. We say that $(X_0,X;i)$ (sometimes $i$ is omitted) is an admissible pair if and only if (a) $i$ induces an isomorphism $i_*: H_2(X_0, \mathbb{Z})\xrightarrow{\cong} H_2(X, \mathbb{Z})$, and (b) denoting by $\mathcal{O}(1)$ the positive generator of $\Pic(X)$ and by $\rho: X\hookrightarrow \mathbb{P}(\Gamma(X, \mathcal{O}(1))^*)=:\mathbb{P}^N$ the first canonical projective embedding of $X$, $\rho \circ i: X_0\hookrightarrow \mathbb{P}^N$ embeds $X_0$ as a smooth linear section of $\rho(X)$.
\end{definition}

In \cite{Zhang2014}, admissible pairs of Hermitian type are classified as subdiagram type, deletion type and special type, where the deletion type is defined as follows.
\begin{definition}[admissible pairs of deletion type]\label{deletion}
	Let $(X_0, X)$ be an admissible pair of irreducible Hermitian symmetric spaces of compact type
	associated to the marked Dynkin diagrams $(\mathcal{D}_0, \gamma_0)$ and $(\mathcal{D}, \gamma)$ respectively, then the pair is called deletion type if we can obtain $X_0$ by deleting a chain connecting $\gamma$ and $\gamma_0$, where a chain means a type A Dynkin diagram.
\end{definition}	

The full classification can be given as follows.

\begin{center}
	\begin{longtable}{|p{4.5em}|p{14em}|p{11em}|}
		\caption{Admissible pairs of deletion type}\label{Admissible pairs of deletion type}\\
		\hline
		Type of $X$  & Marked Dynkin diagram   & $(X_0\subset X)$ 
		\\ \hline
		$B_n$ &  \resizebox{\linewidth}{!}{%
			\begin{tikzpicture}
			\draw[thick] (0,0) -- (3,0) ;
			\draw[thick, dashed] (3,0) -- (6,0) ;
			\draw[->-,thick, double] (6,0) -- (7.5,0) ;
			\draw[ thick, fill=black] (0,0) circle (3pt) node[above, outer sep=3pt]{$\gamma=\alpha_1$};
			\draw[ thick, fill=white] (1.5,0) circle (3pt) node[above, outer sep=3pt]{$\alpha_2$};
			\draw[ thick, fill=white] (3,0) circle (3pt) node[above, outer sep=3pt]{$\alpha_3$};
			\draw[ thick, fill=white] (4.5,0) circle (3pt) node[above,outer sep=3pt]{$\gamma_0=\alpha_m$};
			\draw[ thick, fill=white] (6,0) circle (3pt) node[above,outer sep=3pt]{$\alpha_{n-1}$};
			\draw[ thick, fill=white] (7.5,0) circle (3pt) node[above,outer sep=3pt]{$\alpha_n$};
			\draw (4.5, 0) node[cross] {};
			\end{tikzpicture}}
		& $(Q^{2n-2m+1}\subset Q^{2n-1})$ \\
		\hline 
		
		$D_n$ & \resizebox{\linewidth}{!}{%
			\begin{tikzpicture}
			\draw[thick] (0,0) -- (3,0) (4.5,0)--(6,0) (6,0)--(7.2,0.4) (6,0)--(7.2,-0.4);
			\draw[thick, dashed] (3,0) -- (4.5,0);
			
			\draw[thick, fill=white] (0,0) circle (3pt) node[above, outer sep=3pt]{$\alpha_1$};
			\draw[ thick, fill=white] (1.5,0) circle (3pt) node[above, outer sep=3pt]{$\alpha_2$};
			\draw[ thick, fill=white] (3,0) circle (3pt) node[above, outer sep=3pt]{$\alpha_3$};
			\draw[ thick, fill=white] (4.5,0) circle (3pt) node[above,outer sep=3pt]{$\alpha_{n-3}$};
			\draw[ thick, fill=white] (6,0) circle (3pt) node[above,outer sep=3pt]{$\gamma_0=\alpha_{n-2}$};
			\draw[ thick, fill=white] (7.2,0.4) circle (3pt) node[above,outer sep=3pt]{$\alpha_{n-1}$};
			\draw[ thick, fill=black] (7.2,-0.4) circle (3pt) node[below,outer sep=3pt]{$\gamma=\alpha_{n}$};
			\draw (6, 0) node[cross] {};
			\end{tikzpicture}}
		&$(G(2,n-2)\subset G^{II}(n,n))$\\
		\hline
		
		$D_n$ & \resizebox{\linewidth}{!}{%
			\begin{tikzpicture}
			\draw[thick] (0,0) -- (1.5,0) (4.5,0)--(6,0) (6,0)--(7.2,0.4) (6,0)--(7.2,-0.4);
			\draw[thick, dashed] (1.5,0) -- (4.5,0);
			
			\draw[thick, fill=black] (0,0) circle (3pt) node[above, outer sep=3pt]{$\gamma=\alpha_1$};
			\draw[ thick, fill=white] (1.5,0) circle (3pt) node[above, outer sep=3pt]{$\alpha_2$};
			\draw[ thick, fill=white] (3,0) circle (3pt) node[above, outer sep=3pt]{$\gamma_0=\alpha_m$};
			\draw[ thick, fill=white] (4.5,0) circle (3pt) node[above,outer sep=3pt]{$\alpha_{n-3}$};
			\draw[ thick, fill=white] (6,0) circle (3pt) node[above,outer sep=3pt]{$\alpha_{n-2}$};
			\draw[ thick, fill=white] (7.2,0.4) circle (3pt) node[above,outer sep=3pt]{$\alpha_{n-1}$};
			\draw[ thick, fill=white] (7.2,-0.4) circle (3pt) node[above,outer sep=3pt]{$\alpha_{n}$};
			\draw (3, 0) node[cross] {};
			\end{tikzpicture}}
		&$(Q^{2n-2m}\subset Q^{2n-2})$\\
		\hline
		
		$E_6$ & 
		\resizebox{\linewidth}{!}{%
			\begin{tikzpicture}
			\draw[thick] (0,0) -- (6,0)  (3,0) -- (3,1.5) ;
			\draw[ thick, fill=white] (0,0) circle (3pt) node[above, outer sep=3pt]{$\alpha_1$};
			\draw[ thick, fill=white] (1.5,0) circle (3pt) node[above, outer sep=3pt]{$\alpha_3$};
			\draw[ thick, fill=white] (3,0) circle (3pt) node[above, outer sep=3pt]{$\gamma_0=\alpha_4$};
			\draw[ thick, fill=white] (4.5,0) circle (3pt) node[above,outer sep=3pt]{$\alpha_5$};
			\draw[ thick, fill=black] (6,0) circle (3pt) node[above,outer sep=3pt]{$\gamma=\alpha_6$};
			\draw[ thick, fill=white] (3,1.5) circle (3pt) node[right,outer sep=3pt]{$\alpha_2$};
			\draw (3, 0) node[cross] {};
			\end{tikzpicture}}
		& $(G(2,3) \subset E_6/P_6)$ \\
		\hline
		
		$E_6$ & 
		\resizebox{\linewidth}{!}{%
			\begin{tikzpicture}
			\draw[thick] (0,0) -- (6,0)  (3,0) -- (3,1.5) ;
			\draw[ thick, fill=white] (0,0) circle (3pt) node[above, outer sep=3pt]{$\alpha_1$};
			\draw[ thick, fill=white] (1.5,0) circle (3pt) node[above, outer sep=3pt]{$\alpha_3$};
			\draw[ thick, fill=white] (3,0) circle (3pt) node[above, outer sep=3pt]{$\alpha_4$};
			\draw[ thick, fill=white] (4.5,0) circle (3pt) node[above,outer sep=3pt]{$\gamma_0=\alpha_5$};
			\draw[ thick, fill=black] (6,0) circle (3pt) node[above,outer sep=3pt]{$\gamma=\alpha_6$};
			\draw[ thick, fill=white] (3,1.5) circle (3pt) node[right,outer sep=3pt]{$\alpha_2$};
			\draw (4.5, 0) node[cross] {};
			\end{tikzpicture}}
		& $(G^{II}(5,5) \subset E_6/P_6)$ \\
		\hline
		$E_7$ &  
		\resizebox{\linewidth}{!}{%
			\begin{tikzpicture}
			\draw[thick] (0,0) -- (7.5,0)  (3,0) -- (3,1.5) ;
			\draw[ thick, fill=white] (0,0) circle (3pt) node[above, outer sep=3pt]{$\alpha_1$};
			\draw[ thick, fill=white] (1.5,0) circle (3pt) node[above, outer sep=3pt]{$\alpha_3$};
			\draw[ thick, fill=white] (3,0) circle (3pt) node[above, outer sep=3pt]{$\gamma_0=\alpha_4$};
			\draw[ thick, fill=white] (4.5,0) circle (3pt) node[above,outer sep=3pt]{$\alpha_5$};
			\draw[ thick, fill=white] (6,0) circle (3pt) node[above,outer sep=3pt]{$\alpha_6$};
			\draw[ thick, fill=black] (7.5,0) circle (3pt) node[above,outer sep=3pt]{$\gamma=\alpha_7$};
			\draw[ thick, fill=white] (3,1.5) circle (3pt) node[right,outer sep=3pt]{$\alpha_2$};
			\draw (3, 0) node[cross] {};
			\end{tikzpicture}	}
		& $(G(2,3) \subset E_7/P_7)$ \\
		\hline
		
		$E_7$ &  
		\resizebox{\linewidth}{!}{%
			\begin{tikzpicture}
			\draw[thick] (0,0) -- (7.5,0)  (3,0) -- (3,1.5) ;
			\draw[ thick, fill=white] (0,0) circle (3pt) node[above, outer sep=3pt]{$\alpha_1$};
			\draw[ thick, fill=white] (1.5,0) circle (3pt) node[above, outer sep=3pt]{$\alpha_3$};
			\draw[ thick, fill=white] (3,0) circle (3pt) node[above, outer sep=3pt]{$\alpha_4$};
			\draw[ thick, fill=white] (4.5,0) circle (3pt) node[above,outer sep=3pt]{$\gamma_0=\alpha_5$};
			\draw[ thick, fill=white] (6,0) circle (3pt) node[above,outer sep=3pt]{$\alpha_6$};
			\draw[ thick, fill=black] (7.5,0) circle (3pt) node[above,outer sep=3pt]{$\gamma=\alpha_7$};
			\draw[ thick, fill=white] (3,1.5) circle (3pt) node[right,outer sep=3pt]{$\alpha_2$};
			\draw (4.5, 0) node[cross] {};
			\end{tikzpicture}	}
		& $(G^{II}(5,5) \subset E_7/P_7)$ \\
		\hline
		
		$E_7$ &  
		\resizebox{\linewidth}{!}{%
			\begin{tikzpicture}
			\draw[thick] (0,0) -- (7.5,0)  (3,0) -- (3,1.5) ;
			\draw[ thick, fill=white] (0,0) circle (3pt) node[above, outer sep=3pt]{$\alpha_1$};
			\draw[ thick, fill=white] (1.5,0) circle (3pt) node[above, outer sep=3pt]{$\alpha_3$};
			\draw[ thick, fill=white] (3,0) circle (3pt) node[above, outer sep=3pt]{$\alpha_4$};
			\draw[ thick, fill=white] (4.5,0) circle (3pt) node[above,outer sep=3pt]{$\alpha_5$};
			\draw[ thick, fill=white] (6,0) circle (3pt) node[above,outer sep=3pt]{$\gamma_0=\alpha_6$};
			\draw[ thick, fill=black] (7.5,0) circle (3pt) node[above,outer sep=3pt]{$\gamma=\alpha_7$};
			\draw[ thick, fill=white] (3,1.5) circle (3pt) node[right,outer sep=3pt]{$\alpha_2$};
			\draw (6, 0) node[cross] {};
			\end{tikzpicture}	}
		& $(E_6/P_6 \subset E_7/P_7)$ \\
		\hline
		\end{longtable}
\end{center}

From \cite{Zhang2014} we know for such admissible pairs of Hermitian type, the embedding is induced from a root correspondence.
Let $\Gamma$ be the sum of simple roots in the chain connecting $\gamma$ and $\gamma_0$ (containing $\gamma_0$ but not $\gamma$) in $\mathcal{D}$.
The root correspondence $\Phi$ for deletion type is given by the following \[ \Phi(\beta)=
\begin{cases}
\beta, \beta \ \text{is neither $\gamma_0$ nor adjacent to $\gamma_0$}\\
\gamma, \beta=\gamma_0\\
\beta+\Gamma, \beta \ \text{is adjacent to $\gamma_0$}
\end{cases}
\]

\subsection{Geometric substructure and the rigidity}
The geometric substructure is given by
\begin{definition}[sub-VMRT structure]\label{subvmrt}
	Let $(X_0, X)$ be an admissible pair of rational homogeneous spaces of Picard number one, $W\subset X$ be a connected open subset in the complex topology, and $S\subset W$ be a complex submanifold. Consider the fibered space $\pi: \mathcal{C}(X)\rightarrow X$ of varieties of minimal rational tangents on $X$. For every point $x\in S$, define $\mathcal{C}_x(S):=\mathcal{C}_x(X)\cap \mathbb{P}T_x(S)$ and write $\overline{\omega}=\pi|_{\mathcal{C}(S)}$, $\overline{\omega}^{-1}(x)=\mathcal{C}_x(S)$ for $x\in S$. We say $S\subset W$ inherits a sub-VMRT structure modeled on $(X_0, X)$ if and only if for every point $x \in S$ there exists a neighborhood $U$ of $x$ on $S$ and a trivialization of the holomorphic projective bundle $\mathbb{P}T_{X}|_{U}$ given by $\Phi: \mathbb{P}T_X|_U \xrightarrow{\cong} U\times \mathbb{P}T_0(X)$ such that (1) $\Phi(\mathcal{C}(X)|_U)=U\times \mathcal{C}_0(X)$ and (2) $\Phi(\mathcal{C}(S)|_U)=U\times \mathcal{C}_0(X_0)$
\end{definition}

Then the rigidity of admissible pairs can be defined as follows.

\begin{definition}[rigidity of admissible pairs]\label{rigidity}
	An admissible pair $(X_0, X)$ of rational homogeneous spaces of Picard number one is said to be rigid if and only if for any connected open subset $W\subset X$, any submanifold $S\subset W$ inheriting a sub-VMRT structure modeled on $(X_0, X)$ must necessarily be an open subset of $\gamma(X_0)\subset X$ for some $\gamma\in \Aut(X)$.
\end{definition}

From \cite{MR3960268} the rigidity can be reduced to check a nondegeneracy condition in terms of the projective second fundamental forms. To be more precise, we give

\begin{definition}[non-degeneracy for substructures]\label{nondegeneracy}
Let $(X_0, X)$ be an admissible pair of rational homogeneous spaces of Picard number one. Let $0\in X_0\subset X$ be a reference point and $\alpha \in \widetilde{\mathcal{C}_0(X_0)}$ (the affinized VMRT) be arbitrary. Write $P_\alpha=T_\alpha(\widetilde{\mathcal{C}_0(X)})$, and denote by $\sigma_{\alpha}:S^2P_\alpha\rightarrow T_0(X)/P_\alpha$ the second fundamental form of $\widetilde{\mathcal{C}_0(X)}\subset T_0(X)-\{0\}$ at $\alpha$ with respect to the flat connection on $T_0(X)$ as a Euclidean space. Denote by $\nu_\alpha: T_0(X)/P_\alpha\rightarrow T_0(X)/(P_\alpha+D_0\cap T_0(X_0))$ the canonical projection, where $D_0=\Span(\widetilde{\mathcal{C}_0(X)})$. Write $\tau_\alpha:=\nu_\alpha\circ \sigma_\alpha$, so that $\tau_\alpha: S^2P_\alpha\rightarrow T_0(X)/(P_\alpha+D_0\cap T_0(X_0))$, define
\[Ker \tau_{\alpha}(\cdot,T_\alpha(\widetilde{\mathcal{C}_0(X_0)})):=\{\eta\in T_\alpha(\widetilde{\mathcal{C}_0(X)}): \tau_{\alpha}(\eta,\xi)=0 \ \forall \xi\in T_\alpha(\widetilde{\mathcal{C}_0(X_0)}) \}\]
We say that $(\mathcal{C}_0(X_0), \mathcal{C}_0(X))$ is nondegenerate for substructures if and only if for a general point $\alpha\in \widetilde{\mathcal{C}_0(X_0)}$ we have $Ker \tau_{\alpha}(\cdot,T_\alpha(\widetilde{\mathcal{C}_0(X_0)}))=T_\alpha(\widetilde{\mathcal{C}_0(X_0)})$.
\end{definition}

In \cite{MR3960268}, they obtained that 

\begin{proposition}
	If $(\mathcal{C}_0(X_0),\mathcal{C}_0(X))$ is non-degenerate for substructure, then for any connected open subset $W\subset X$, any submanifold $S\subset W$ inheriting a sub-VMRT structure modeled on $(X_0, X)$ must be rational saturated.
\end{proposition}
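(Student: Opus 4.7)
\emph{Proof proposal.} The plan is to fix a point $0 \in S$ and, given any $\alpha \in T_0(X)$ with $[\alpha] \in \mathcal{C}_0(S) := \mathcal{C}_0(X) \cap \mathbb{P}T_0(S)$, show that the unique germ of minimal rational curve $\ell_\alpha$ of $X$ through $0$ with $T_0(\ell_\alpha) = \mathbb{C}\alpha$ is contained in $S$. Since $S$ is locally cut out by holomorphic functions $f_1, \dots, f_k$ whose first-order vanishing locus at $0$ is exactly $T_0(S)$, the containment $\ell_\alpha \subset S$ reduces to showing that every Taylor coefficient of $f_i \circ \ell_\alpha$ at $t=0$ vanishes. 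The zeroth and first orders are automatic from $0 \in S$ and $\alpha \in T_0(S)$, so the whole content of the proposition lies at order two and above.

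The key step treats the second-order coefficient and is where non-degeneracy enters. Writing $\ell_\alpha(t) = t\alpha + t^2 \beta + O(t^3)$ in local coordinates, the second derivative $\frac{d^2}{dt^2}\big|_0 (f_i \circ \ell_\alpha)$ modulo $T_0(S)$ becomes a relation between the Hessians of the $f_i$ at $0$ and the vector $\beta \in T_0(X)/\mathbb{C}\alpha$, which is intrinsic to the VMRT of $X$. Using the trivialization $\Phi$ of Definition \ref{subvmrt}, which identifies $\mathcal{C}(S)|_U$ with $U \times \mathcal{C}_0(X_0)$ and hence (by linear non-degeneracy of $\widetilde{\mathcal{C}_0(X_0)}$ in $T_0(X_0)$) identifies $T_0(S)$ with $T_0(X_0) \subset T_0(X)$, one rewrites this obstruction as a pairing landing in the target $T_0(X)/(P_\alpha + D_0 \cap T_0(X_0))$ of $\tau_\alpha = \nu_\alpha \circ \sigma_\alpha$. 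Second-order contact of $\ell_\alpha$ with $S$ at $0$ turns out to be equivalent to the vanishing $\tau_\alpha(\eta, \xi) = 0$ for all $\xi \in T_\alpha(\widetilde{\mathcal{C}_0(X_0)})$, where $\eta \in T_\alpha(\widetilde{\mathcal{C}_0(X)})$ records the deviation between the ambient-VMRT-intrinsic $\beta$ and its sub-VMRT counterpart; the non-degeneracy hypothesis then forces $\eta \in T_\alpha(\widetilde{\mathcal{C}_0(X_0)})$, which is exactly the second-order contact sought.

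The higher-order vanishings follow by propagation along $\ell_\alpha$: once second-order contact at $0$ is established, applying the same argument at $\ell_\alpha(t)$ for small $t$ (with $\ell_\alpha(t)$ now playing the role of the base point, and the sub-VMRT structure transported via the VMRT bundle along $\ell_\alpha$) gives contact to all orders, hence $\ell_\alpha \subset S$. The principal obstacle is the precise second-order identification: matching the intrinsic second fundamental form of $\widetilde{\mathcal{C}(X)}$ against the second-jet data of $S$ produced by $\Phi$, and carefully controlling the indeterminacy of $\beta$ modulo $\mathbb{C}\alpha$ as well as the appearance of $P_\alpha + D_0 \cap T_0(X_0)$ in the quotient defining $\tau_\alpha$. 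Once this dictionary is in place, non-degeneracy closes the argument at $0$; since $0$ was an arbitrary point of $S$, $S$ is rational saturated.
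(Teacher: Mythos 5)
The paper does not actually prove this proposition: it is quoted verbatim from Mok--Zhang \cite{MR3960268} (``In \cite{MR3960268}, they obtained that\dots''), so there is no in-paper argument to compare yours against. Judging your sketch on its own terms against the known argument in \cite{MR3960268}: the infinitesimal part is pointed in the right direction --- the obstruction to a minimal rational curve $\ell_\alpha$ having higher contact with $S$ is indeed packaged via the second fundamental form, and the non-degeneracy hypothesis is indeed invoked to force a certain $\eta\in T_\alpha(\widetilde{\mathcal{C}_0(X)})$ with $\tau_\alpha(\eta,\cdot)|_{T_\alpha(\widetilde{\mathcal{C}_0(X_0)})}=0$ into $T_\alpha(\widetilde{\mathcal{C}_0(X_0)})$. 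But the sketch has two genuine problems.

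First, the propagation step is circular as written. You propose to ``apply the same argument at $\ell_\alpha(t)$ for small $t$, with $\ell_\alpha(t)$ now playing the role of the base point'' --- but applying the argument at $\ell_\alpha(t)$ requires knowing $\ell_\alpha(t)\in S$ and $T_{\ell_\alpha(t)}(\ell_\alpha)\in\mathcal{C}_{\ell_\alpha(t)}(S)$, which is precisely what is being proved. Second-order contact at the single point $0$, even combined with the (unestablished) second-order contact at points you do not yet know lie on $S$, does not yield containment. The way \cite{MR3960268} closes this loop is not jet-by-jet at a fixed point but by working on the total space: one shows that at \emph{every} point $[\alpha]\in\mathcal{C}(S)$ the direction of the tautological foliation of $\mathcal{C}(X)$ (whose leaves are the tautological liftings $\hat\ell$ of minimal rational curves) lies in $T_{[\alpha]}(\mathcal{C}(S))$; this tangency statement is where $\tau_\alpha$ and non-degeneracy enter, and it is a first-order condition verified pointwise on all of $\mathcal{C}(S)$. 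Uniqueness of integral curves then forces the leaf through a point of $\mathcal{C}(S)$ to stay in $\mathcal{C}(S)$, hence $\ell_\alpha\subset S$ as a germ. That ODE/foliation mechanism is the missing idea; your ``propagation'' paragraph cannot substitute for it.

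Second, the step you label as key --- ``second-order contact of $\ell_\alpha$ with $S$ at $0$ turns out to be equivalent to the vanishing $\tau_\alpha(\eta,\xi)=0$'' --- is asserted rather than proved, and the quantifier structure is off: in the actual argument one \emph{proves} that the relevant $\eta$ satisfies $\tau_\alpha(\eta,\xi)=0$ for all $\xi\in T_\alpha(\widetilde{\mathcal{C}_0(X_0)})$ (this vanishing comes from differentiating the condition $\mathcal{C}(S)=\mathcal{C}(X)\cap\mathbb{P}T(S)$ along $\mathcal{C}(S)$, using the isotriviality from Definition \ref{subvmrt}), and only then does non-degeneracy convert that vanishing into $\eta\in T_\alpha(\widetilde{\mathcal{C}_0(X_0)})$. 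As it stands your proposal identifies the right objects but supplies neither the derivation of the vanishing nor the bridge from pointwise tangency to containment of the curve.
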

 In \cite{Zhang2014}, degeneracy for substructure appears for admissible pairs of deletion type. This is the reason why we need the assumption of rational saturation in Theorem \ref{thmdeletion}. Since the parallel transport argument in \cite{MR3960268} fails for admissible pairs of deletion type, Mok \cite{MR4028278} introduced a new criterion to check the rigidity, as stated in Theorem \ref{Mokthmsaturation}.
 
 Finally, since in \cite{Zhang2014}, the degeneracy for substructures in deletion type cases is mentioned without proof, we give a proof here.
 \begin{proposition}
 	Let $X_0, X$ be irreducible Hermitian symmetric spaces of compact type.
 	If $(X_0, X)$ is an admissible pair of deletion type, then $(\mathcal{C}_0(X_0), \mathcal{C}_0(X))$ is degenerate for substructures.
 \end{proposition}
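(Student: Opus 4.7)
I would prove degeneracy by establishing the stronger vanishing
\[\tau_\alpha\bigl(\cdot,\,T_\alpha(\widetilde{\mathcal{C}_0(X_0)})\bigr)\equiv 0\]
at a general $\alpha\in\widetilde{\mathcal{C}_0(X_0)}$. Once this is in hand, $Ker\,\tau_\alpha(\cdot,T_\alpha(\widetilde{\mathcal{C}_0(X_0)}))$ equals all of $T_\alpha(\widetilde{\mathcal{C}_0(X)})$, which strictly contains $T_\alpha(\widetilde{\mathcal{C}_0(X_0)})$ because a dimension count from Table \ref{Admissible pairs of deletion type} gives $\dim\mathcal{C}_0(X_0)<\dim\mathcal{C}_0(X)$ in each entry, so degeneracy follows.

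Let $\phi:\mathfrak{g}_0\hookrightarrow\mathfrak{g}$ be the Lie-algebra embedding of the admissible pair determined by the root correspondence $\Phi$, and take $\alpha=\phi(e_{\theta_0})$, the image in $\mathfrak{p}^+\cong T_0(X)$ of the highest-weight vector of $\mathfrak{p}_0^+\cong T_0(X_0)$. With the identifications $T_\alpha(\widetilde{\mathcal{C}_0(X)})=[\mathfrak{k}^{\mathbb{C}},\alpha]$ and $T_\alpha(\widetilde{\mathcal{C}_0(X_0)})=[\phi(\mathfrak{k}_0^{\mathbb{C}}),\alpha]$, a standard Jacobi computation yields
\[\sigma_\alpha([X,\alpha],[Y,\alpha])\equiv [X,[Y,\alpha]]\pmod{P_\alpha}\]
for the second fundamental form of the $K^{\mathbb{C}}$-orbit $\widetilde{\mathcal{C}_0(X)}\subset\mathfrak{p}^+$ at $\alpha$, so the target vanishing reduces to
\[[X,[Y,\alpha]]\in P_\alpha+T_0(X_0)\qquad\forall\,X\in\mathfrak{k}^{\mathbb{C}},\ Y\in\phi(\mathfrak{k}_0^{\mathbb{C}}).\]
Splitting $X=X'+X''$ with $X'\in\phi(\mathfrak{k}_0^{\mathbb{C}})$ disposes of $X'$ trivially (it sends $T_0(X_0)$ into itself), and a further Jacobi step reduces the $X''$-contribution to the root-combinatorial claim that for every weight vector $e_\mu\in P_\alpha$ and every root vector $e_\beta\in\phi(\mathfrak{k}_0^{\mathbb{C}})$, the root sum $\beta+\mu$ (when it is a root of $\mathfrak{g}$) lies in the root support of $P_\alpha$ or of $\Phi(R_{X_0}^{+})$.

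For the two quadric families $(Q^{2n-2m+1},Q^{2n-1})$ and $(Q^{2n-2m},Q^{2n-2})$ the claim is automatic: $T_0(X)/P_\alpha$ is one-dimensional and the image of $T_0(X_0)$ surjects onto it, so $P_\alpha+T_0(X_0)=T_0(X)$ and $\tau_\alpha$ vanishes identically on all of $S^2 P_\alpha$. For the Grassmannian pair $(G(2,n-2)\subset G^{II}(n,n))$ in type $D_n$ and the five exceptional pairs in types $E_6$ and $E_7$ from Table \ref{Admissible pairs of deletion type}, the check is carried out root by root from the marked Dynkin diagrams, using the explicit shift $\Phi(\beta)=\beta+\Gamma$ for simple roots adjacent to $\gamma_0$. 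I expect this exceptional-case verification to be the main obstacle: although the weight of $\alpha$ and the explicit form of $\Gamma$ restrict attention to a short list of root sums near the top of $R_X^{+}$, the argument is not entirely uniform and must be tabulated separately for each $E_6$ and $E_7$ entry. Should the stronger vanishing fail in any of these cases, the weaker conclusion (degeneracy) can nevertheless be obtained by exhibiting a single witness $\eta\in T_\alpha(\widetilde{\mathcal{C}_0(X)})\setminus T_\alpha(\widetilde{\mathcal{C}_0(X_0)})$ in the kernel of $\tau_\alpha(\cdot,T_\alpha(\widetilde{\mathcal{C}_0(X_0)}))$, constructed from a 1-parameter family of deformations of $X_0$ inside $X$ reflecting the non-rigidity of the pair highlighted in \cite{Zhang2014}.
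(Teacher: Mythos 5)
Your main route rests on a claim that is false for the non-quadric entries of Table \ref{Admissible pairs of deletion type}, so the case-by-case verification you deferred would not merely be laborious --- it would fail. Take the $D_5$ entry $(X_0,X)=(G(2,3),G^{II}(5,5))$, whose VMRT pair is the Segre $\mathbb{P}A\times\mathbb{P}B\subset Gr(2,A\oplus B)$ with $\dim A=2$, $\dim B=3$, sitting in $\mathbb{P}(A\wedge B)\subset\mathbb{P}(\Lambda^2(A\oplus B))$. At $\alpha=a_1\wedge b_1$ one has $P_\alpha=\mathbb{C}^5\wedge b_1+a_1\wedge\mathbb{C}^5$ and $T_0(X_0)=A\wedge B$, and with respect to $\Lambda^2\mathbb{C}^5=\Lambda^2A\oplus(A\wedge B)\oplus\Lambda^2B$ the subspace $P_\alpha+D_0\cap T_0(X_0)=\Lambda^2A\oplus(A\wedge B)\oplus(B\wedge b_1)$ misses the line $\mathbb{C}\,b_2\wedge b_3$. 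Since $\sigma_\alpha(u\wedge b_1+a_1\wedge v,\,u'\wedge b_1+a_1\wedge v')=u\wedge v'+u'\wedge v \bmod P_\alpha$, the choice $\eta=b_3\wedge b_1\in P_\alpha$ and $\xi=a_1\wedge b_2\in T_\alpha(\widetilde{\mathcal{C}_0(X_0)})$ gives $\tau_\alpha(\eta,\xi)=b_3\wedge b_2\neq 0$. So $\tau_\alpha(\cdot,T_\alpha(\widetilde{\mathcal{C}_0(X_0)}))$ does not vanish identically; the kernel is strictly between $T_\alpha(\widetilde{\mathcal{C}_0(X_0)})$ and $T_\alpha(\widetilde{\mathcal{C}_0(X)})$. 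Your quadric computation is correct, but there the vanishing is an artifact of $P_\alpha+T_0(X_0)=T_0(X)$, which is special to that family and does not propagate.

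What survives is exactly your fallback --- exhibit one witness $\eta$ in the kernel lying outside $T_\alpha(\widetilde{\mathcal{C}_0(X_0)})$ --- but you leave it unconstructed, and ``a 1-parameter family of deformations reflecting non-rigidity'' is a heuristic, not an argument. That witness is where the whole content of the proposition lives, and it can be produced uniformly: the paper reduces (via degeneracy for mappings, \cite[Lemma 3.1]{MR3960268}) to the formula $\sigma_\alpha(E_\nu,E_{\nu'})=[E_{\nu-\gamma},[E_{\nu'-\gamma},E_\gamma]]$ at $\alpha=E_\gamma$, takes $\eta=E_{\gamma+\beta}$ with $\beta$ the simple root adjacent to $\gamma$, and checks in one line that $\langle\beta,\gamma+\Gamma+\kappa\rangle=0$, so that $\beta+\gamma+\Gamma+\kappa$ is never a root and hence $\sigma_\alpha(E_{\gamma+\beta},T_\alpha(\widetilde{\mathcal{C}_0(X_0)}))=0$. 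In the example above this witness is $\eta=a_1\wedge a_2$, for which $\sigma_\alpha(\eta,\xi)\in\Lambda^2A\subset P_\alpha$ for all $\xi\in T_\alpha(\widetilde{\mathcal{C}_0(X_0)})$. I would therefore invert the structure of your argument: drop the strong vanishing entirely and make the single-witness root computation the proof.
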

\begin{proof}
From \cite[Lemma 3.1]{MR3960268}, it suffices to show that $(\mathcal{C}_0(X_0), \mathcal{C}_0(X))$ is degenerate for mappings, i.e. for a general point $\alpha\in \widetilde{\mathcal{C}_0(X_0)}$ we have \[Ker \sigma_{\alpha}(\cdot,T_\alpha(\widetilde{\mathcal{C}_0(X_0)})):=\{\eta\in T_\alpha(\widetilde{\mathcal{C}_0(X)}): \sigma_{\alpha}(\eta,\xi)=0 \ \forall \xi\in T_\alpha(\widetilde{\mathcal{C}_0(X_0)}) \}=\mathbb{C}\alpha.\]
 When $X$ is a hyperquadric, the conclusion is easy to see as $\sigma$ is precisely the holomorphic conformal structure of a hyperquadric. Then for our convenience assume that $X$ is not of $B_n$ type and all roots are of equal length.
Without loss of generality we may assume the point $\alpha=E_{\gamma}$ where $\gamma$ is the marked root.
From \cite[Lemma 3.2]{MR3960268},  the second fundamental form $\sigma_\alpha$ can be written as 
\[\sigma_\alpha(E_{\nu}, E_{\nu'})=[E_{\nu-\gamma},[E_{\nu'-\gamma}, E_{\gamma}]]\mod (P_\alpha+\mathfrak{p})\]
where $\nu, \nu'\in \Psi_\gamma:=\{\mu, \mu-\gamma \ \text{is a root or}\ 0  \}$ and $E_{\nu}, E_{\nu'}\in T_\alpha(\widetilde{\mathcal{C}_0(X)})=\Span\{E_\mu, \mu\in \Psi_\gamma\}$.  Similarly we write $\Psi_{\gamma_0}:=\{\mu, \mu-\gamma_0 \ \text{is a root with root space in } \mathfrak{g}_0 \ \text{or} \ 0 \}$.  Then after taking the root correspondence we have $T_\alpha(\widetilde{\mathcal{C}_0(X_0)})=\mathbb{C}E_{\gamma}+\Span\{E_\mu, \mu=\gamma+\Gamma+\kappa, \kappa\neq 0,\kappa+ \gamma_0 \in \Psi_{\gamma_0} \}$. Take a root vector $E_{\gamma+\beta}$ where $\beta$ is adjacent to $\gamma$, since $<\beta, \gamma+\Gamma+\kappa>=-1+2-1=0$ and all roots are of equal length, we know $\beta+\gamma+\Gamma+\kappa$ is not a root. Hence $\sigma_\alpha(E_{\gamma+\beta}, T_\alpha(\widetilde{\mathcal{C}_0(X_0)}))=0$ and then $(\mathcal{C}_0(X_0), \mathcal{C}_0(X))$ is degenerate for substructures..
\end{proof}

\subsection{Formulation of Condition (F)}
We give the Condition (F) in Theorem \ref{Mokthmsaturation} as follows.
Suppose that $(X_0, X)$ is an admissible pair of rational homogeneous spaces of Picard number one, we know there is a corresponding natural embedding of their VMRTs $i:\mathcal{C}_0(X_0) \hookrightarrow \mathcal{C}(X)$, we call an embedding $i':\mathcal{C}_0(X_0) \hookrightarrow \mathcal{C}(X)$ is standard if $i'=g|_{\mathcal{C}(X)}\circ i$ where $g$ is a projective automorphism of $\mathbb{P}T_0(X)$ preserving $\mathcal{C}_0(X)$. 

Let $\lambda: \mathcal{C}_0(X_0) \hookrightarrow \mathcal{C}_0(X)$ be an arbitrary holomorphic embedding satisfying (i)$\mathbb{P}(\Span\widetilde{\im(\lambda)})\cap \mathcal{C}_0(X)=\lambda(\mathcal{C}_0(X_0))$ and (ii)
$\im(\lambda)\subset \mathbb{P}(\Span\widetilde{\im(\lambda)})$ is projectively equivalent to the inclusion $(\mathcal{C}_0(X_0)\subset \mathbb{P}T_0(X_0))$
, we say the pair $(\mathcal{C}_0(X_0), \mathcal{C}(X))$ satisfies Condition (F) if $\lambda$ is a standard embedding.

\section{Sketch of Mok's proof of Theorem \ref{Mokthmsaturation}}\label{sketch}
In this section we give a sketch of Mok's proof of Theorem \ref{Mokthmsaturation}. The key ingredient is the following proposition on algebracity of the germ, which tells that the germ inheriting a sub-VMRT structure modeled on $(X_0, X)$ is contained in some projective subvariety whose normalization is biholomorphic to $X_0$, under the assumption of rational saturation. We start with

\begin{proposition}[Proposition 3.1 in \cite{MR4028278}]
	Let $(X_0, X)$ be an admissible pair of irreducible Hermitian symmetric spaces of compact type of rank $\geq 2$, $0\in X_0\subset X$. Suppose that $(\mathcal{C}_0(X_0), \mathcal{C}_0(X))$ satisfies Condition (F). Let $W\subset X$ be an open subset in complex topology, and $S\subset W$ be a complex submanifold inheriting a sub-VMRT structure modeled on $(X_0, X)$ and $S$ is rational saturated. Then there exists a projective subvariety $Z\subset X$ such that $S\subset Z$ and $\dim(Z)=\dim(S)$. Moreover writing $\nu: \widetilde{Z}\rightarrow Z$ for the normalization, there is a biholomorphism $\Phi: X_0 \xrightarrow{\cong} \widetilde{Z}$, and for any choice of such a biholomorphism $\Phi$ the holomorphic map $h:=\mu\circ \Phi :X_0\rightarrow X$ is of degree 1, i.e. $h^*(\mathcal{O}(1))\cong \mathcal{O}(1)$.
\end{proposition}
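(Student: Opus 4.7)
The plan is to first extend the germ $S$ to a global projective subvariety $Z \subset X$ of the same dimension using rational saturation, and then exploit the sub-VMRT structure together with Condition (F) to identify the normalization $\widetilde{Z}$ with $X_0$ and deduce the degree statement.

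First I would construct $Z$ as follows. Rational saturation tells us that for every $x \in S$ and every $[\alpha] \in \mathcal{C}_x(S)$, the germ at $x$ of the minimal rational curve of $X$ tangent to $\alpha$ is contained in $S$. Since a minimal rational curve of $X$ is a $\mathbb{P}^1$ uniquely determined by any of its germs, each such curve extends to a complete projective curve in $X$. Let $\mathcal{K}$ be the constructible family of those minimal rational curves of $X$ that are tangent to $S$ at some point in a direction belonging to $\mathcal{C}(S)$, and let $Z \subset X$ be the Zariski closure of the union of the curves in $\mathcal{K}$. Then $Z$ is an irreducible projective subvariety containing $S$, and a dimension count on the associated double fibration $\mathcal{K} \leftarrow \mathcal{U} \rightarrow X$ yields $\dim Z = \dim S$, because through a general $x \in S$ the curves of $\mathcal{K}$ through $x$ have tangent directions precisely $\mathcal{C}_x(S)$, whose linear span already lies inside $T_x(S)$.

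Next I would identify $\widetilde{Z}$ with $X_0$. The sub-VMRT structure on $S$ propagates along the curves in $\mathcal{K}$ to a sub-VMRT structure on a dense open subset of the smooth locus of $\widetilde{Z}$, and Condition (F) forces the pointwise embedding $\mathcal{C}_{\tilde z}(\widetilde{Z}) \hookrightarrow \mathcal{C}_{\nu(\tilde z)}(X)$ to be a standard embedding at every such point. A Cartan--Fubini type analytic continuation along chains of minimal rational curves, using that $X_0$ is generated from any base point by its minimal rational curves, should globalize this pointwise rigidity to a biholomorphism $\Phi : X_0 \xrightarrow{\cong} \widetilde{Z}$ sending each minimal rational curve of $X_0$ to the strict transform of a curve in $\mathcal{K}$. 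Setting $h := \nu \circ \Phi$, the degree claim is then quick: since $\Pic(X_0) \cong \Z \cdot \mathcal{O}(1)$ we have $h^*\mathcal{O}(1) \cong \mathcal{O}(k)$ for some $k \geq 1$, and restricting to any minimal rational curve $\ell_0 \subset X_0$ shows $h(\ell_0)$ is a minimal rational curve of $X$ and hence of $\mathcal{O}(1)_X$-degree $1$, while $\ell_0$ has $\mathcal{O}(1)_{X_0}$-degree $1$; so $k = 1$.

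The main obstacle will be the globalization step. Condition (F) provides only pointwise projective rigidity of the sub-VMRT, and rational saturation provides only germ-level extension along single minimal rational curves; turning this into an honest biholomorphism with the globally homogeneous $X_0$, rather than a local model merely sharing the correct infinitesimal data, requires a Hwang--Mok style reconstruction through propagation along minimal rational chains and careful control across the exceptional locus of the normalization $\nu$. Ensuring that the resulting $\Phi$ is well-defined on overlapping charts, extends across the non-normal locus, and surjects onto $\widetilde{Z}$ is where the substantive work will lie.
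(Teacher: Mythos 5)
There is a genuine gap at the very first step, and it is the step that carries most of the content of the proposition. You define $Z$ as the Zariski closure of the union of all minimal rational curves tangent to $S$ in directions of $\mathcal{C}(S)$ and assert $\dim Z=\dim S$ by ``a dimension count on the double fibration''. This does not follow: the curves in your family $\mathcal{K}$ leave $S$, and a priori their tails sweep out a set of dimension up to $\dim\mathcal{K}+1$, with no control on the fibres of the evaluation map over points \emph{outside} $S$; since rank $\geq 2$ forces $\dim\mathcal{C}_0(X_0)+1<\dim X_0$, the fact that the span of $\widetilde{\mathcal{C}_x(S)}$ lies in $T_x(S)$ says nothing about where the distant parts of the curves go. The equality $\dim Z=\dim S$ is exactly what the actual argument establishes by a constructive extension process: around each minimal rational curve $\ell$ touching $S$ one builds a collar $\mathbf{N}_\ell$ of dimension $\dim S$, propagates the fibre $\mathcal{C}_x(S)\subset \mathbb{P}T_x(S)$ by parallel transport along minimal rational curves lying in the collar to points $y\in\mathbf{N}_\ell\setminus S$, and then iterates (adjunction of minimal rational curves) until one exhausts an immersed submanifold whose normalization is projective; only then is $Z$ obtained, with the dimension bound built in. Your construction skips precisely this, and the ``main obstacle'' you flag at the end is in fact the proof.

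Relatedly, you have misplaced the role of Condition (F). Parallel transport along a minimal rational curve only propagates the abstract inclusion $\mathcal{C}_y(\mathbf{N}_\ell)\subset\mathbb{P}T_y(\mathbf{N}_\ell)$ to points $y$ outside $S$; it does not tell you how this transported variety sits relative to the ambient VMRT $\mathcal{C}_y(X)$. Condition (F) is invoked exactly to conclude that any such fitting of a copy of $\mathcal{C}_0(X_0)$ into $\mathcal{C}_0(X)$ satisfying the spanning and intersection conditions is standard, so that the extended germ again inherits a sub-VMRT structure modeled on $(X_0,X)$ and the iteration can continue; it is not a pointwise rigidity statement applied on $\widetilde{Z}$ after the fact. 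Finally, the identification $\widetilde{Z}\cong X_0$ is obtained from the Hwang--Mok recognition theorem applied to the resulting VMRT structure on the normalization, rather than by a Cartan--Fubini continuation issuing from $X_0$; your closing degree argument (that $h^*\mathcal{O}(1)\cong\mathcal{O}(k)$ and $k=1$ by restricting to a minimal rational curve) is fine.
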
 
\begin{proof}[Sketch of the proof]
	For any point $x\in S$, take any minimal rational curve $\ell$ on $X$ emanating from $x$ such that $[T_x(\ell)]=:[\alpha]\in \mathcal{C}_x(S)=\mathcal{C}_x(X)\cap \mathbb{P}T_x(S)$, so that $(\ell ;x)\subset (S; x)$ by linear saturation. 
	The first step is to construct a collar $\mathbf{N}_\ell$ around $\ell$ and propagate the fibered space $\overline{\omega}: \mathcal{C}(S)\rightarrow S$ to $\mathbf{N}_\ell$ such that it is still isotrivial with fibers isomorphic to $\mathcal{C}_0(X_0)\subset \mathbb{P}T_0(X_0)$, in other words, we want to propagate the $L_0$-structure on $S$ to $\mathbf{N}_\ell$ where $L_0$ is a reductive part of $P_0$.

 $\mathcal{C}_x(S)$ is propagated by parallel transport from $x\in S$ to $y\in \textbf{N}_\ell\backslash S$ along minimal rational curves lying in the collar, i.e. choose a minimal rational curve $\Lambda$ of $X$ lying in $\textbf{N}_\ell$, $(\mathcal{C}_y(\textbf{N}_\ell)\subset \mathbb{P}T_y(\textbf{N}_\ell))$ is projectively equivalent to $(\mathcal{C}_x(\textbf{N}_\ell)\subset \mathbb{P}T_x(\textbf{N}_\ell))$ for \textit{arbitrary} point $y\in \Lambda$. 
 The propagation is guaranteed by using the lifting of minimal rational curves and the relative second fundamental form.  The main difficulty is that when we do the parallel transport, we can only propagate $(\mathcal{C}_x(\textbf{N}_\ell)\subset \mathbb{P}T_x(\textbf{N}_\ell))$, but not the pair $(\mathcal{C}_x(\textbf{N}_\ell), \mathcal{C}_x(X))$ to a point $y$ outside $S$. Thus we need the \textit{fitting} of sub-VMRTs into VMRTs and Condition (F) is required.
 
	Then we remain to use the adjunction by minimal rational curves and do the iteration. Finally we obtain an immersed submanifold $Z$ whose normalization $\widetilde{Z}\rightarrow Z$ is nonsingular and projective. By the recognition theorem given by Hwang and Mok \cite[Theorem 2.2]{MR4028278}, $\widetilde{Z}$ is biholomorphic to $X_0$.
\end{proof}
\begin{remark}
	In fact the Condition (F) is automatically satisfied on general points. By Chevalley's theorem, any $P$-orbit in $Gr(\dim(X_0),T_x(X))$ is constructible and moreover locally closed (open and dense in its Zariski closure).  Hence, to check the Condition (F), it is equivalent to show that Chow point in the boundary of $P.[\mathcal{C}_0(X_0)]$ is corresponding to a singular subvariety in $\mathcal{C}_0(X)$.
\end{remark}
Now we give a sketch of of proof of Theorem \ref{Mokthmsaturation}.

\begin{proof}[Sketch of proof of Theorem \ref{Mokthmsaturation}]
Define $Z_t:=T_{\frac{1}{t}}(Z)=\{w:tw\in Z\}$. The first step is to show that $Z_t$ converges to $X_0$ as $t\rightarrow 0$. This can be deduced from 
\[\begin{aligned}\text{Volume}(\text{Reg}(Z_t), \omega|_{\text{Reg}(Z_t)})&= \text{Volume}(\text{Reg}(Z),  \omega|_{\text{Reg}(Z)})  \\&=\text{Volume}(X_0,  h^*\omega)=\text{Volume}(X_0, \omega|_{X_0}) \end{aligned}\]
 where $\omega$ is the K\"{a}hler form such that minimal rational curves are of unit volume, $h: X_0\rightarrow X$ is a holomorphic immersion (a rational map) with $h(X_0)=Z$. The first equality holds since $t$ is from a $\mathbb{C}^*$-action; the second equality is from the fact that $X_0$ is biholomorphic to the normalization of $Z$; the third equality holds since $h^*(\mathcal{O}(1))=\mathcal{O}(1)$. Since the difference between the limit of $Z_t$ and $X_0$ is a cycle supported in the boundary divisor of $X$ (as complement of an affine cell), the volume equality forces that $Z_t\rightarrow X_0$ as $t\rightarrow 0$.
 
 Next step is to show that $h$ is actually a holomorphic embedding. This is deduced from the semicontinuity of Lelong numbers, i.e. 
 \[\nu([X_0],q)\geq \limsup_{n\rightarrow \infty}\nu([Z_{\frac{1}{n}}],p_n)=\nu([Z],p)\geq 2\]
 where $p\in \text{Sing}(Z), p_n=T_n(Z)$ and $q$ is a limit point of $\{p_n\}_{n\geq 1}$, $\nu(S, x)$ deontes the Lelong number of a closed positive current $S$ on $X$ at $x$. Contradiction arises since $X_0$ is smooth. Now it remains to show that when $t$ is close to $0$, $Z_{t}=\gamma_t(X_0)$ for some $\gamma_t\in \Aut_0(X)$, i.e. local deformation of $X_0\subset X$ must be induced by $\Aut_0(X)$. This will be guaranteed by the surjectivity of the restriction map $r: \Gamma(X, T_X)\rightarrow \Gamma(X_0, T_{X|X_0})$ (see \cite[Lemma 5.1]{MR4028278}).
\end{proof}

\section{Checking the Condition (F)}\label{checkingconditionF}
In this section, we check the Condition (F) when $(X_0, X)$ is of deletion type and maximal. For our convenience we generalize the definition of admissible pairs such that $X_0$ is allowed to be of higher Picard number.
\begin{definition}\label{admissible}
	Let $X$ be a rational homogeneous space of Picard number one, and $X_0$ be a rational homogeneous space of Picard number $r$, and $i:X_0 \hookrightarrow X$ be a holomorphic embedding equivariant with respect to a homomorphism of complex Lie groups $\Phi :\Aut_0(X_0) \rightarrow \Aut_0(X)$. We say that $(X_0, X;i)$ is an admissible pair if and only if (a) $i$ induces a homomorphsim $i_*$ : $H_2(X_0, \mathbb{Z})\rightarrow H_2(X,\mathbb{Z})$ which maps the each generator of $H_2(X_0, \mathbb{Z})$ into the generator of $H_2(X,\mathbb{Z})$, and (b) denoting by $\mathcal{O}(1)$ the positive generator of $\Pic(X)$ and by $\rho: X \hookrightarrow \mathbb{P}(\Gamma(X, \mathcal{O}(1))^*)=:\mathbb{P}^N$ the first canonical projective embedding of $X$, $\rho\circ i: X_0 \hookrightarrow \mathbb{P}^N$ embeds $X_0$ as a smooth linear section of $\rho(X)$.
	
\end{definition}

We first prove

\begin{lemma}\label{firstfitting}
	If $\{pt\}\cup \mathbb{P}^1$ is embedded in $\mathbb{P}^1\times \mathbb{P}^2\hookrightarrow \mathbb{P}^5$ as a linear section by $\mathbb{P}^2$ and $\mathbb{P}^1$ is embedded in $ \mathbb{P}^1\times \mathbb{P}^2$ of bidegree $(1,0)$ or $(0,1)$. Then the images of all such embeddings are transitive by $\Aut(\mathbb{P}^1\times \mathbb{P}^2).$
\end{lemma}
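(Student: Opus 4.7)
The plan is to exploit one structural fact about lines on the Segre threefold $\Sigma := \mathbb{P}^1 \times \mathbb{P}^2 \subset \mathbb{P}^5$ to reduce the hypothesis to a single explicit configuration, and then invoke standard parabolic theory for transitivity. Let $L$ be the image of the embedded $\mathbb{P}^1$, $P$ the image of the point, and $\Pi \subset \mathbb{P}^5$ the linear $2$-plane spanned by $L$ and $P$; the hypothesis reads $\Pi \cap \Sigma = L \sqcup \{P\}$ set-theoretically. The key fact I will use is that for two distinct points $P_1 = (p_1, q_1), P_2 = (p_2, q_2) \in \Sigma$, the $\mathbb{P}^5$-line through them lies in $\Sigma$ if and only if $p_1 = p_2$ (giving a bidegree $(0,1)$-line) or $q_1 = q_2$ (giving a bidegree $(1,0)$-line). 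This follows from the rank-one characterization $\Sigma \subset \mathbb{P}(\mathbb{C}^2 \otimes \mathbb{C}^3)$: a nontrivial linear combination of two simple tensors is again simple exactly when they share a factor.

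The consequence I will exploit repeatedly is that whenever some point of $L$ shares a coordinate with $P$, an entire unwanted line is forced into $\Pi \cap \Sigma$. First, this rules out the bidegree $(1,0)$ case: if $L = \mathbb{P}^1 \times \{q\}$ and $P = (p'', q'')$ with $q'' \neq q$, then $(p'', q) \in L$ shares its first coordinate with $P$, so the $\mathbb{P}^5$-line joining them is a bidegree $(0,1)$-line inside $\Pi \cap \Sigma$, forbidden. So only the bidegree $(0,1)$ case is possible; write $L = \{p\} \times \ell$ with $p \in \mathbb{P}^1$ and $\ell \subset \mathbb{P}^2$ a line, and consider $P = (p'', q'') \notin L$. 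There are three subcases according to the position of $P$ relative to $(p, \ell)$. If $p'' = p$, both $L$ and $P$ lie in the linear $2$-plane $\{p\} \times \mathbb{P}^2 \subset \Sigma$; hence $\Pi = \{p\} \times \mathbb{P}^2 \subset \Sigma$ and the intersection equals $\Pi$, which is far too large. If $p'' \neq p$ but $q'' \in \ell$, then $(p, q'') \in L$ shares its second coordinate with $P$, and the $\mathbb{P}^5$-line joining them produces a bidegree $(1,0)$-line in $\Pi \cap \Sigma$. Only $p'' \neq p$ and $q'' \notin \ell$ survives, and a direct computation in coordinates (e.g.\ $p = [1:0]$, $\ell = \{y_0 = 0\}$, $P = ([0:1], [1:0:0])$) confirms $\Pi \cap \Sigma = L \sqcup \{P\}$ in this case.

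To conclude, I check transitivity on the surviving locus. The group $\Aut(\Sigma) = PGL_2 \times PGL_3$ acts transitively on bidegree $(0,1)$-lines of $\Sigma$, and the stabilizer of $L = \{p\} \times \ell$ decomposes as $\mathrm{Stab}(p) \times \mathrm{Stab}(\ell)$. Each factor is a maximal parabolic in the respective $PGL$, hence acts transitively on the complement of its fixed flag: $\mathbb{P}^1 \setminus \{p\}$ and $\mathbb{P}^2 \setminus \ell$ respectively. Therefore $\mathrm{Stab}(L)$ is transitive on $(\mathbb{P}^1 \setminus \{p\}) \times (\mathbb{P}^2 \setminus \ell)$, which is exactly the admissible locus of $P$ identified above, and all such embeddings lie in a single $\Aut(\Sigma)$-orbit. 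The main technical point is the case analysis itself: recognizing via the shared-coordinate principle that the bidegree $(1,0)$ case is vacuous and that in the $(0,1)$ case only the "general position" subcase can possibly satisfy the linear-section condition. Once these eliminations are in hand, transitivity is essentially immediate from the Bruhat decomposition of each factor.
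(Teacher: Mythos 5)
Your proof is correct and follows essentially the same route as the paper's: rule out the bidegree $(1,0)$ case by producing a forbidden $(0,1)$-line in the spanning plane, eliminate the degenerate positions of the point in the $(0,1)$ case by the same shared-factor argument, and finish by a transitivity check with a stabilizer (you fix the line and move the point, the paper fixes the point and moves the line, which is equivalent). Your version merely makes explicit the rank-one criterion for lines on the Segre variety and the final Bruhat-type transitivity computation, both of which the paper leaves as "simple checking."
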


\begin{proof}
If $\mathbb{P}^1$ is embedded as $\mathbb{P}^1\times \{y\}\subset \mathbb{P}^1\times \mathbb{P}^2$ for some $y\in \mathbb{P}^2$ in the second factor and the remaining point is written as $(a,b)\in \mathbb{P}^1\times \mathbb{P}^2$, we join $(a,y)$ with $(a,b)$ and then obtain a rational curve in the $\mathbb{P}^1\times \mathbb{P}^2$ of bidegree $(0,1)$. The curve is in $ \mathbb{P}^2\cap (\mathbb{P}^1\times \mathbb{P}^2)$ distinct from $\mathbb{P}^1\times \{y\}$, a contradiction. Hence $\mathbb{P}^1$ must be embedded as $\{x\}\times \mathbb{P}^1\subset \mathbb{P}^1\times \mathbb{P}^2$ for some $x\in \mathbb{P}^1$ in the first factor. Now we also write the remaining point as $(a,b)$. If $a=x$, $ \mathbb{P}^2\cap (\mathbb{P}^1\times \mathbb{P}^2)$ contains a $\mathbb{P}^2$, also a contradiction. Hence $a\neq x$. 
For similar reason, $b$ is not in the line $\mathbb{P}^1\subset \mathbb{P}^2$. Then it remains to prove that if we fix $(a, b)\in \mathbb{P}^1\times \mathbb{P}^2$, $P_{(a,b)}:=Stab((a,b))$ acts transitively on space of lines  $\{x\}\times \mathbb{P}^1\subset \mathbb{P}^1\times \mathbb{P}^2$ with $x\neq a, b\notin \mathbb{P}^1$. The checking is simple.
\end{proof}

The lemma actually gives the 'Fitting' of the embedding of $\{pt\}\cap \mathbb{P}^1$ into $\mathbb{P}^1\times \mathbb{P}^2$.
Admissible pairs of deletion type and maximal can be exhausted by taking the VMRTS of $(E_6/P_6, E_7/P_7)$, more precisely, they are $(E_6/P_6, E_7/P_7)\xrightarrow{\text{take VMRTS}} (G^{II}(5,5), E_6/P_6)\xrightarrow{\text{take VMRTS}} (G(2,3), G^{II}(5,5))$. If we continue taking the VMRTs, we have $(G(2,3), G^{II}(5,5))\xrightarrow{\text{take VMRTS}} (\mathbb{P}^1\times \mathbb{P}^2, G(2,3))$. Then together with Lemma \ref{firstfitting}
it suffices to prove the following proposition.

\begin{proposition}
	Let $\lambda: X_0 \rightarrow X\hookrightarrow \mathbb{P}^N$ be an arbitraty holomorphic embedding satisfies (i) $\mathbb{P}(\Span\widetilde{\im(\lambda)})\cap X=\lambda(X_0)$ and (ii)
	$\im(\lambda)\subset \mathbb{P}(\Span\widetilde{\im(\lambda)})$ is projectively equivalent to the inclusion $(X_0, X)$. If $(\mathcal{C}_0(X_0), \mathcal{C}(X))$ satisfies Condition (F), then $\lambda$ is a standard embedding, i.e. under $\Aut(X)$ it is equivalent to the embedding of $X_0$ in $X$ as an admissible pair of deletion type. 
\end{proposition}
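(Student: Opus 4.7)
The strategy is to descend to the VMRT at a basepoint, invoke the hypothesized Condition (F) there to pin down the first-order data, and then lift this pointwise agreement to a global identification via minimal rational curves. To begin, fix $0 \in X_0$; since $\Aut_0(X)$ acts transitively on $X$, I post-compose $\lambda$ with an element of $\Aut_0(X)$ to arrange $\lambda(0) = 0 \in X$. Hypotheses (i)--(ii) then say that $\lambda(X_0) = L \cap X$ is a smooth linear section of $X$ projectively equivalent to the first canonical model of $X_0$, so its minimal rational curves are exactly the minimal rational curves of $X$ contained in $\lambda(X_0)$. Differentiating at $0$ therefore produces an embedding
\[
\mu := d\lambda_0|_{\mathcal{C}_0(X_0)} \colon \mathcal{C}_0(X_0) \hookrightarrow \mathcal{C}_0(X).
\]
This $\mu$ satisfies the two hypotheses of Condition (F): from $T_0(\lambda(X_0)) = L \cap T_0(X)$ and simple spannedness of the affine VMRT of $X_0$ in $T_0(X_0)$, one reads off $\Span\widetilde{\im \mu} = T_0(\lambda(X_0))$ and $\mathbb{P}T_0(\lambda(X_0)) \cap \mathcal{C}_0(X) = \mu(\mathcal{C}_0(X_0))$; the projective equivalence in (ii) for $\lambda$ passes to the tangent cone to give (ii) for $\mu$.

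Next I apply the Condition (F) hypothesis. Since $(\mathcal{C}_0(X_0), \mathcal{C}(X))$ satisfies Condition (F), the VMRT embedding $\mu$ is standard, i.e.\ $\mu = g|_{\mathcal{C}_0(X)} \circ \iota_\ast$ for some projective automorphism $g$ of $\mathbb{P}T_0(X)$ preserving $\mathcal{C}_0(X)$, where $\iota: X_0^{\mathrm{std}} \hookrightarrow X$ denotes the standard admissible-pair inclusion. For each irreducible Hermitian symmetric $X$ appearing in our chain, every such $g$ is realized by the Levi component of the isotropy $P_0 \subset \Aut_0(X)$ at $0$ (classical description of the isotropy representation on $T_0(X)$), so after post-composing $\lambda$ once more with an element of $P_0$ I may assume $d\lambda_0 = d\iota_0$, and in particular $T_0(\lambda(X_0)) = T_0(X_0^{\mathrm{std}})$ and $\mathcal{C}_0(\lambda(X_0)) = \mathcal{C}_0(X_0^{\mathrm{std}})$ inside $\mathcal{C}_0(X)$. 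Finally, by linear saturation for smooth linear sections, every minimal rational curve $\ell$ of $X$ through $0$ tangent to this common sub-VMRT lies in both $\lambda(X_0)$ and $X_0^{\mathrm{std}}$; iterating at points along such $\ell$ (using that the sub-VMRT structure on a smooth linear section is isotrivial along minimal rational curves in a collar, exactly as in the sketch of Mok's proof above) yields chains of minimal rational curves contained in $\lambda(X_0) \cap X_0^{\mathrm{std}}$ which sweep out a Zariski-open subset of $X_0^{\mathrm{std}}$. Irreducibility and closedness of both $\lambda(X_0)$ and $X_0^{\mathrm{std}}$, together with equality of dimensions, then force $\lambda(X_0) = X_0^{\mathrm{std}}$, proving that $\lambda$ is standard.

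The crux of the argument is the iteration in the last step: matching VMRTs at a single point is only first-order data, so one must propagate along chains of minimal rational curves to obtain a Zariski-open identification. For $X_0$ of Picard number one a single family of curves through $0$ already suffices, but in the base case $X_0 = \mathbb{P}^1 \times \mathbb{P}^2$ inside $X = G(2,3)$ (and more generally whenever $X_0$ is a nontrivial product in the generalized admissible-pair setup of Definition \ref{admissible}) a short two-step chain argument is needed, which is exactly what the Lemma above handles at the deepest level of the VMRT tower. A secondary delicacy, in the invocation of Condition (F), is lifting the abstract projective automorphism $g$ on the VMRT to an element of the ambient isotropy $P_0 \subset \Aut_0(X)$; this follows from the fact that for each relevant $X$ the image of the isotropy representation exhausts the projective automorphism group of $(\mathcal{C}_0(X), \mathbb{P}T_0(X))$ up to center, but should be verified case by case along the chain $(E_6/P_6, E_7/P_7) \to (G^{II}(5,5), E_6/P_6) \to (G(2,3), G^{II}(5,5)) \to (\mathbb{P}^1 \times \mathbb{P}^2, G(2,3))$.
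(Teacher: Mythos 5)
The paper states this proposition \emph{without proof}, so there is no argument of record to compare yours against: the surrounding text only indicates that it is meant as the inductive step of a reduction along the tower $(E_6/P_6,E_7/P_7)\to(G^{II}(5,5),E_6/P_6)\to(G(2,3),G^{II}(5,5))\to(\mathbb{P}^1\times\mathbb{P}^2,G(2,3))$, with Lemma \ref{firstfitting} as the base case. Your first four steps follow the route this architecture suggests: normalizing $\lambda(0)=0$, observing that hypotheses (i)--(ii) make $d\lambda_0|_{\mathcal{C}_0(X_0)}$ an embedding of VMRTs satisfying the two hypotheses of Condition (F), invoking Condition (F), and lifting the resulting projective automorphism $g$ of $(\mathbb{P}T_0(X),\mathcal{C}_0(X))$ to the isotropy subgroup. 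The lifting step is only asserted, but for irreducible Hermitian symmetric spaces the linear automorphism group of the affine cone over the VMRT is indeed the image of the isotropy representation (up to components that extend to $\Aut(X)$), so this part is acceptable modulo the case-check you flag.

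The last step, however, contains a genuine gap. After your normalizations you know only the first-order data $T_0(\lambda(X_0))=T_0(X_0^{\mathrm{std}})$, and this does not determine the linear section: already for the maximal deletion pair $(Q^{n-2},Q^n)$ the linear subspaces $L\ni 0$ of the correct dimension with $L\cap T_0(Q^n)$ equal to a fixed subspace and $L\cap Q^n$ smooth form a positive-dimensional family (an orbit of the unipotent part of the isotropy, which acts trivially on $T_0(X)$ but nontrivially on the sections), so the equality $\lambda(X_0)=X_0^{\mathrm{std}}$ that your iteration is supposed to deliver is simply false as stated. Correspondingly, the propagation mechanism breaks at the second step: at a point $y\neq 0$ of a minimal rational curve $\ell$ common to both sections one has $T_y(\lambda(X_0))=L\cap T_y(X)$ and $T_y(X_0^{\mathrm{std}})=L'\cap T_y(X)$, and these need not coincide merely because $L\cap T_0(X)=L'\cap T_0(X)$; isotriviality of the sub-VMRT structure along $\ell$ does not give you this. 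What is actually needed, and what your proposal omits, is a transitivity statement: the stabilizer of $(0,T_0(X_0^{\mathrm{std}}))$ in $\Aut(X)$ acts transitively on the family of linear sections through $0$ satisfying (i)--(ii) and tangent to $T_0(X_0^{\mathrm{std}})$ there. This is exactly the kind of explicit orbit computation that Lemma \ref{firstfitting} performs in the base case $(\{pt\}\cup\mathbb{P}^1,\ \mathbb{P}^1\times\mathbb{P}^2)$, and without its analogue at each stage of the tower the induction does not close.
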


%\begin{proof}
%We first claim that if (i) $\mathbb{P}(\Span\widetilde{\im(\lambda)})\cap X=\lambda(X_0)$ and (ii)
%$\im(\lambda)\subset \mathbb{P}(\Span\widetilde{\im(\lambda)})$ is projectively equivalent to the inclusion $(X_0, X)$, then $\lambda(X_0)$ is totally geodesic with respect to some choice of canonical K\"{a}hler-Einstein metric. We fix a Fubini-Study metric on $H$ on $\mathbb{P}^N$ such that under the standard embedding $X_0$ is totally geodesic in $(X, H|_{X})$. Denote the standard embedding by $i$. By (ii) we know there is a $g\in \Aut(\mathbb{P}^N)$ such that 
%\end{proof}
\section{Checking the surjectivity of the restriction map}\label{checkingsujectivity}
We first prove the lemma
\begin{lemma}\label{deletionasvmrt}
	Suppose that $X$ is not of $A_n$ (the VMRT is not of Picard number one) or $C_n$ type (the VMRT is a Veronese embedding of a projective space).
	Let $\mathcal{V}_x$ deonte the cone swept out by minimal rational curves in $X$ marked at $x$. Let $\mathcal{W}\subset X$ be a Harish-Chandra coordinate such that $x$ is the zero point.  Let \[\begin{aligned}
	\mathcal{N}=&\{C\backslash \mathcal{W}: C \ \text{is a minimal rational curve passing through} \ x \} 
	\end{aligned}\] be the locus of infinity points of those minimal rational curves. Then the embedding $\mathcal{N}\subset X$ gives the admissible pair of deletion type and it is maximal.
\end{lemma}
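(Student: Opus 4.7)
The plan is to identify the infinity locus $\mathcal{N}$ with a Schubert subvariety of $X$ biholomorphic to the VMRT $\mathcal{C}_x(X)$, and then match the embedding $\mathcal{N} \hookrightarrow X$ against an entry in Table \ref{Admissible pairs of deletion type}.

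First I would construct a natural holomorphic injection $\phi : \mathcal{C}_x(X) \to X$ whose image is $\mathcal{N}$. Since $X$ is an irreducible Hermitian symmetric space of compact type, each minimal rational curve through $x$ is a projective line in the first canonical embedding; it therefore meets the Harish-Chandra cell $\mathcal{W}$ in an affine line whose closure in $\mathbb{P}^1$ adds exactly one point outside $\mathcal{W}$. Two distinct lines in $X \subset \mathbb{P}^N$ cannot share a second common point, so $\phi$ is injective. Root-theoretically, writing $X = G/P_\gamma$ with Levi $K$ and choosing an $\mathfrak{sl}_2$-triple at the marked simple root $\gamma$, the minimal rational curve tangent to $[E_\gamma]$ is the $SL_2$-orbit of $x$ and its infinity point is $s_\gamma \cdot x$. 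Since $K$ acts transitively on $\mathcal{C}_x(X)$, I would conclude that $\mathcal{N} = K \cdot (s_\gamma \cdot x)$, a $K$-homogeneous Schubert subvariety of $X$.

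Next I would match $(\mathcal{N}, X)$ with the classification. Under the hypothesis that $X$ is neither of type $A_n$ nor $C_n$, the marked node $\gamma$ in the Dynkin diagram $\mathcal{D}$ of $X$ has a unique adjacent node $\gamma_0$, and the VMRT $\mathcal{C}_x(X)$ is itself an irreducible Hermitian symmetric space of Picard number one with marked datum $(\mathcal{D} \setminus \{\gamma\}, \gamma_0)$. Comparing case by case with Table \ref{Admissible pairs of deletion type}, one sees that $(\mathcal{N}, X)$ coincides with the deletion-type pair whose connecting chain $\Gamma$ consists of the single node $\gamma_0$; the five such pairs are $(Q^{2n-3}, Q^{2n-1})$, $(Q^{2n-4}, Q^{2n-2})$, $(G(2, n-2), G^{II}(n,n))$, $(G^{II}(5,5), E_6/P_6)$ and $(E_6/P_6, E_7/P_7)$. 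In each case the inclusion $\mathcal{N} \hookrightarrow X$ is realized by the deletion-type root correspondence with $\Gamma = \gamma_0$, hence is a smooth linear section of $X$ in the first canonical embedding and $(\mathcal{N}, X)$ is an admissible pair of deletion type. Maximality is automatic: a one-node chain cannot be split into two nonempty subchains, so there is no intermediate $X_1$ yielding a decomposition of $(\mathcal{N}, X)$ into two admissible pairs of deletion type.

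The hard part will be verifying that the Schubert description $\mathcal{N} = K \cdot (s_\gamma \cdot x)$ actually realizes the embedding dictated by the deletion-type root correspondence, rather than merely being abstractly isomorphic to it. This reduces to a root-theoretic computation at the base point $s_\gamma \cdot x$: one shows that $T_{s_\gamma \cdot x}(\mathcal{N}) \subset T_{s_\gamma \cdot x}(X)$ is spanned by exactly the root vectors $E_{\Phi(\beta)}$ prescribed by $\Phi$, using the adjoint action of $s_\gamma$ on root spaces together with the simple combinatorics of $\mathcal{D}$ near $\gamma$. Once this matching of tangent spaces at a $K$-fixed base point is established, $K$-equivariance propagates the identification to all of $\mathcal{N}$ and the lemma follows.
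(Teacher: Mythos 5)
Your overall strategy coincides with the paper's: both identify $\mathcal{N}$ as a closed orbit of the reductive part $K$ of $P$ (the infinity point of the minimal rational curve tangent to $[E_\gamma]$ being $s_\gamma\cdot x$), reduce the lemma to a root-theoretic identification of the tangent space of $\mathcal{N}$ at one distinguished point with the tangent space prescribed by the deletion-type root correspondence $\Phi$, and then conclude by homogeneity. The only cosmetic difference is the base point: you work at $s_\gamma\cdot x\in\mathcal{N}$, while the paper translates $\mathcal{N}$ by a group element $g$ so that it passes through $x$ and computes there, quoting \cite[Proposition 3.4]{ding2022birational} for the identity $T_x(g\mathcal{N})=\bigoplus_{\beta\in\Delta^+_{\mathfrak{n}},\,\langle\beta,\gamma\rangle=1}\mathbb{C}E_\beta$.

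The genuine gap is that the step you label ``the hard part''---checking that this tangent space is exactly the one dictated by $\Phi$---is the entire content of the proof, and your proposal does not carry it out; moreover the Weyl-group element that effects the matching in the paper is $s_{\gamma_0}$ (which fixes $x$ since $\gamma_0\neq\gamma$), not the $s_\gamma$ your sketch points to. Concretely, the paper invokes \cite[Lemma 1.11]{Zhang2014} to write every positive noncompact root $\beta$ of $\mathfrak{g}_0$ as $\gamma_0+\theta+\Sigma$ (when $\langle\beta,\gamma_0\rangle=1$) or $\gamma_0+2\theta+\Sigma$ (when $\langle\beta,\gamma_0\rangle=0$), applies $\Phi$ to obtain the roots $\gamma+\gamma_0+\theta+\Sigma$ and $\gamma+2\gamma_0+2\theta+\Sigma$ spanning $T_x(X_0)$ together with $\gamma$ itself, and then verifies that $s_{\gamma_0}$ carries each of these to a root $\mu$ with $\langle\mu,\gamma\rangle=1$; a dimension count gives $s_{\gamma_0}.T_x(X_0)=T_x(g\mathcal{N})$, and \cite[Proposition 2.4]{Zhang2014} upgrades the equality of tangent spaces to equality of the subvarieties. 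Your case-by-case comparison with the classification table only pins down the abstract isomorphism type of $\mathcal{N}$, which---as you yourself acknowledge---is insufficient, so without this explicit root computation (or a substitute for it) the argument does not close. The quadric cases, which the paper's root normalization quietly sidesteps elsewhere by treating hyperquadrics separately, would also need a word in a complete write-up.
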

\begin{proof}
	From \cite[Proposition  3.4]{ding2022birational} we know after suitable group action $g$ we have $x\in g.\mathcal{N}$ and $T_x(g.\mathcal{N})=\mathbb{C}\{E_{\beta}\}_{\beta\in \Delta^+_{\mathfrak{n}}, <\beta, \gamma>=1}$ where $ \Delta^+_{\mathfrak{n}}$ is the set of positive noncompact roots. On the other hand, let\[ \begin{aligned}
	&
	\mathcal{H}_{\gamma_0}=\mathbb{C}\{E_{\beta}\}_{\beta\in \Delta^+_{\mathfrak{n},0}, <\beta, \gamma_0>=1 }  \\&
	 \mathcal{N}_{\gamma_0}=\mathbb{C}\{E_{\beta}\}_{\beta\in \Delta^+_{\mathfrak{n},0}, <\beta, \gamma_0>=0 } \end{aligned}\]
	 where $\Delta^+_{\mathfrak{n}, 0}$ is the set of positive noncompact roots for $\mathfrak{g}_0$. From  \cite[Lemma 1.11]{Zhang2014}, we know for any $\beta\in \Delta^+_{\mathfrak{n},0}$ with $<\beta, \gamma_0>=1$, we can write $\beta$ as $\beta=\gamma_0+\theta+\Sigma$ where $\theta$ is the root adjacent to $\gamma_0$ in $\mathcal{D}(G_0)$ and $\Sigma$ is a linear combination of roots which are not adjacent to $\gamma_0$ in  $\mathcal{D}(G_0)$.
	 After the root correspondence in the admissible pair of deletion type, such $\beta$ is corresponding to the root $\gamma+\gamma_0+\theta+\Sigma$ and we know $<\gamma, \gamma+\gamma_0+\theta+\Sigma>=1$.
	 Then after the action by the simple reflection $s_{\gamma_0}$ in the Weyl group, we know $s_{\gamma_0}(\gamma+\gamma_0+\theta+\Sigma)=\gamma+\gamma_0+\theta+\Sigma$.
	 
	 Also from  \cite[Lemma 1.11]{Zhang2014}, we know for any $\beta\in \Delta^+_{\mathfrak{n},0}$ with $<\beta, \gamma_0>=0$, we can write $\beta$ as $\beta=\gamma_0+2\theta+\Sigma$ where $\theta$ is the root adjacent to $\gamma_0$ in $\mathcal{D}(G_0)$ and $\Sigma$ is a linear combination of roots which are not adjacent to $\gamma_0$ in  $\mathcal{D}(G_0)$.  After the root correspondence in the admissible pair of deletion type, such $\beta$ is corresponding to the root $\gamma+2\gamma_0+2\theta+\Sigma$ and we know $<\gamma, \gamma+2\gamma_0+2\theta+\Sigma>=0$. Then after the action by the simple reflection $s_{\gamma_0}$ in the Weyl group, we know $s_{\gamma_0}(\gamma+2\gamma_0+2\theta+\Sigma)=\gamma+\gamma_0+2\theta+\Sigma$.
	 
	 Since $<\gamma, \gamma+\gamma_0+2\theta+\Sigma>=1$, $s_{\gamma_0}(\gamma)=\gamma+\gamma_0$ and $<\gamma, \gamma+\gamma_0>=1$, in summary we have 
	$s_{\gamma_0}.T_x(X_0)\subset T_x(g\mathcal{N})$ and hence they are equal. Then the lemma follows from \cite[Proposition 2.4]{Zhang2014}.
	
\end{proof}

Then we have
\begin{proposition}\label{decomp}
	When $(X_0, X)$ is of deletion type and maximal,
	the normal bundle $N_{X_0|X}$ is a direct sum of $\mathcal{O}_{X_0}(1)$ and an irreducible $P_0$-homogeneous bundle.
\end{proposition}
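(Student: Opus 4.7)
The plan is to reduce to representation theory by computing the $L_0$-module decomposition of the fiber of $N_{X_0|X}$ at $0$, using the natural $\gamma_0$-grading of $T_0(X)$.

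Equivariance of $X_0\hookrightarrow X$ under $\Phi:\Aut_0(X_0)\to\Aut_0(X)$ makes $N_{X_0|X}$ a $P_0$-homogeneous bundle on $X_0=G_0/P_0$, so decomposing it into irreducible $P_0$-homogeneous summands is equivalent to decomposing the fiber $N_{X_0|X,0}=T_0(X)/T_0(X_0)$ into irreducible $L_0$-modules. Fix a common Cartan and grade $\mathfrak{m}^+=T_0(X)=\bigoplus_k\mathfrak{m}^+_k$, where $\mathfrak{m}^+_k$ is spanned by the root vectors $E_\beta$ for $\beta\in\Delta_{\mathfrak{n}}^+(\mathfrak{g})$ whose $\gamma_0$-coefficient equals $k$. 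Since every root of $L_0$ has vanishing $\gamma_0$-coefficient, each $\mathfrak{m}^+_k$ is $L_0$-stable; and from the classification the maximum $\gamma_0$-coefficient on $\Delta_{\mathfrak{n}}^+(\mathfrak{g})$ is $2$ in every maximal case, giving $\mathfrak{m}^+=\mathfrak{m}^+_0\oplus\mathfrak{m}^+_1\oplus\mathfrak{m}^+_2$.

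Maximality makes $\gamma$ and $\gamma_0$ adjacent simple roots of $\mathcal{D}$, with $\gamma$ a Dynkin-endpoint whose only neighbor in $\mathcal{D}$ is $\gamma_0$ (verified by inspection of Table \ref{Admissible pairs of deletion type}). Any noncompact positive root of $\mathfrak{g}$ with $\gamma_0$-coefficient $0$ then has connected support reducing to $\{\gamma\}$, forcing $\mathfrak{m}^+_0=\mathbb{C}E_\gamma$ to be one-dimensional; its $L_0$-weight $\gamma|_{\mathfrak{h}_0}$ coincides with the fundamental weight $\lambda_{\gamma_0}$ of $\mathfrak{g}_0$ by a direct Dynkin-diagram comparison, so the associated line bundle is $\mathcal{O}_{X_0}(1)$. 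I would check case-by-case from the table that $\mathfrak{m}^+_1$ and $\mathfrak{m}^+_2$ are each irreducible under $L_0$, and that the three summands $\mathfrak{m}^+_0,\mathfrak{m}^+_1,\mathfrak{m}^+_2$ are pairwise non-isomorphic. Combined with Lemma \ref{deletionasvmrt} — whose proof establishes $s_{\gamma_0}.T_0(X_0)=\mathfrak{m}^+_1$ as subspaces, and since $s_{\gamma_0}$ lifts to an automorphism of $X$ one gets the abstract $L_0$-isomorphism $T_0(X_0)\cong\mathfrak{m}^+_1$ — this yields
\[N_{X_0|X,0}\;\cong\;\mathfrak{m}^+_0\oplus\mathfrak{m}^+_2\]
as $L_0$-modules, so $N_{X_0|X}\cong\mathcal{O}_{X_0}(1)\oplus\mathcal{V}$ with $\mathcal{V}$ the irreducible $P_0$-homogeneous bundle attached to $\mathfrak{m}^+_2$.

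The main obstacle I anticipate is the case-by-case verification that each of $\mathfrak{m}^+_1,\mathfrak{m}^+_2$ is $L_0$-irreducible in the $D_n$, $E_6$ and $E_7$ entries of the table, and the accompanying identification $\gamma|_{\mathfrak{h}_0}=\lambda_{\gamma_0}$ via the root correspondence $\Phi$; both amount to elementary but bookkeeping-intensive root-combinatorial computations in each case.
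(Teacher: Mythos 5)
Your overall strategy (branching of $T_0(X)$ under $L_0$) is the one the paper's own remark flags as ``a natural way to prove the proposition,'' but the paper deliberately avoids it and instead argues geometrically: it realizes $X_0$ as the locus at infinity $\mathcal{N}$ of the minimal rational curves through a point (Lemma \ref{deletionasvmrt}), produces the $\mathcal{O}_{X_0}(1)$-summand from the variation of the section $s_0+ts_\infty$ of the universal $\mathbb{P}^1$-bundle over $\mathcal{C}(X)$, and obtains the complementary irreducible summand as the pushforward of $N_{\mathcal{C}_x(X)|\mathbb{P}T_x(X)}$. So even a corrected version of your argument would be a genuinely different route. As written, however, your argument has a real gap.

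The problem is that the grading of $\mathfrak{m}^+$ by the $\gamma_0$-coefficient is \emph{not} $L_0$-stable for the deletion-type embedding. The root correspondence sends the simple root $\beta$ of $\mathcal{D}_0$ adjacent to its marked node to $\Phi(\beta)=\beta+\Gamma$, and in the maximal case $\Gamma=\gamma_0$; so $L_0$, viewed inside $G$ via $\Phi$, has roots (e.g.\ $\alpha_5+\alpha_6$ in the $E_7$ case) with $\gamma_0$-coefficient $1$, and $\mathrm{ad}(E_{\alpha_5+\alpha_6})$ carries $E_{\alpha_7}\in\mathfrak{m}^+_0$ to $E_{\alpha_5+\alpha_6+\alpha_7}\in\mathfrak{m}^+_1$. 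Your premise ``every root of $L_0$ has vanishing $\gamma_0$-coefficient'' confuses the deletion-type $L_0$ with the Levi of the parabolic of $G$ attached to the node $\gamma_0$ (equivalently, with the subdiagram embedding). Two downstream claims then fail concretely. First, $\mathfrak{m}^+_0=\mathbb{C}E_\gamma$ cannot be a summand of $N_{X_0|X,0}$, because $E_\gamma=\Phi(E_{\gamma_0^{(0)}})$ already lies in $T_0(X_0)$ (the marked simple root of $\mathcal{D}_0$ is itself a noncompact positive root of $\mathfrak{g}_0$); the $\mathcal{O}_{X_0}(1)$-direction in the normal bundle is not the root space $E_\gamma$. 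Second, $T_0(X_0)\neq\mathfrak{m}^+_1$ even as an abstract identification compatible with the quotient: by the computation in the proof of Lemma \ref{deletionasvmrt}, $T_0(X_0)$ contains root vectors for $\gamma$, for $\gamma+\gamma_0+\theta+\Sigma$, and for $\gamma+2\gamma_0+2\theta+\Sigma$, i.e.\ it meets all three graded pieces $\mathfrak{m}^+_0,\mathfrak{m}^+_1,\mathfrak{m}^+_2$; that lemma establishes $s_{\gamma_0}.T_0(X_0)=\{E_\beta:\langle\beta,\gamma\rangle=1\}$, which is not your $\mathfrak{m}^+_1$. To salvage the representation-theoretic route you would need to grade by a cocharacter genuinely central in $\Phi(L_0)$ (the image under $\Phi$ of the fundamental coweight dual to the marked node of $\mathcal{D}_0$), or simply carry out the branching $\mathfrak{m}^+|_{L_0}$ case by case; the statement itself is correct, but the bookkeeping you defer is exactly where the present version breaks.
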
	
\begin{proof}
 We consider the tautological  $\mathbb{P}^1$-bundle over $\mathcal{C}(X)$ denoted by $\mathcal{M}$. Let $ev: \mathcal{M}\rightarrow X$ be the evaluation map. Consider two disjoint global sections $s_0, s_{\infty}$ of this bundle such that $ev\circ s_0|_{(x,\mathcal{C}_x(X))}: \mathcal{C}(X) \rightarrow \mathcal{M}\rightarrow X$ collapses to one point $x\in X$ and image of $ev\circ s_\infty|_{(x,\mathcal{C}_x(X))}: \mathcal{C}(X) \rightarrow \mathcal{M}\rightarrow X$ is in the 'infinity part' if we choose some Harish-Chandra coordinate in $X$. From Lemma \ref{deletionasvmrt}, the 'infinity part' gives the embedding $X_0 \subset X$ as an admissible pair of deletion type.
Let $\mathcal{V}_x$ deonte the cone swept out by minimal rational curves marked at $x$. Then for each $t\in \mathbb{C}^*$, image of $ev\circ (s_0+ts_{\infty})|_{(x,\mathcal{C}_x(X))}$ is a linear section of $\mathcal{V}_x$ isomorphic to $\mathcal{C}_x(X)$ and image of $ev\circ (s_{\infty})|_{(x,\mathcal{C}_x(X))}$ is precisely $X_0$. 
Let $\mathcal{O}_{\mathcal{M}}(1)$ be the line bundle over $\mathcal{M}$ which restricted to each fiber $\mathbb{P}^1$ is $\mathcal{O}_{\mathbb{P}^1}(1)$. Then $ev_*\mathcal{O}_{\mathcal{M}}(1)=\mathcal{O}_X(1)$ as we can see that $ev_*\mathcal{O}_{\mathcal{M}}(1)$ restricted to each minimal rational curve in $X$ is isomorphic to $\mathcal{O}_{\mathbb{P}^1}(1)$. The variation of $t$ gives a section of the normal bundle $N_{X_0|X}$ which generates a line subbundle isomorphic to $ev_*\mathcal{O}_{\mathcal{M}}(1)|_{X_0}=\mathcal{O}_X(1)|_{X_0}=\mathcal{O}_{X_0}(1)$. Thus we can always find an $\mathcal{O}_{X_0}(1)$-factor in the decomposition of $N_{X_0|X}$. 

Moreover we can consider the normal bundle $\mathcal{N}_{\mathcal{C}(X)|\mathbb{P}T(X)}$ over $\mathcal{C}(X)$ which restricted to each $\mathcal{C}_x(X)$ is $N_{\mathcal{C}_x(X)|\mathbb{P}T_x(X)}$. Over each fibre, $N_{\mathcal{C}_x(X)|\mathbb{P}T_x(X)}$ is an irreducible $P_0$-homogeneous bundle. Let $ev': \mathcal{C}(X)\rightarrow X$ be the evaluation map, then $ev'_*(N_{\mathcal{C}_x(X)|\mathbb{P}T_x(X)})=N_{\mathcal{V}^{sm}_x|X}$ and hence $N_{X_0|X}=\mathcal{O}_{X_0}(1)\oplus N_{\mathcal{V}^{sm}_x|X}|_{X_0}$.

\end{proof}
\begin{remark}
		A natural way to prove the proposition is to use the branching rule in representation theory. In the special case in Lemma \ref{decomp}, we have such a geometric proof as above. In general, under the embedding $G_0/P_0\hookrightarrow G/P$ the normal bundle can be decomposed as direct sum of irreducible $P_0$-homogeneous bundles. We know the tangent bundle $T(G/P)$ is an irreducible $P$-homogeneous bundle, then the normal bundle decomposition can be obtained if we restrict the $P$-representation to the subgroup $P_0\subset P$, which can be described as a branching rule in representation theory. As the nilpotent parts act trivially we only need to consider the restriction of reductive parts of $P_0$ and $P$, denoted by $K_0, K$ respectively. The embedding of $K_0\subset K$ gives an embedding of Cartan subalgebras $\mathfrak{h}_0\subset \mathfrak{h}$ and their dual space $\mathfrak{h}^{\vee}_0\subset \mathfrak{h}^{\vee}$ from the root correspondence. After restriction of the representation, the operation on a weight in $\mathfrak{h}^{\vee}$ is the projection under $\mathfrak{h}^{\vee}=\mathfrak{h}^{\vee}_0\oplus Ann(\mathfrak{h}_0)$.
	
\end{remark}
Now we are ready to obtain the surjectivity of the restriction map $r: \Gamma(X, T_X)\rightarrow \Gamma(X_0, T_{X|X_0})$.

\begin{proposition}
	When $(X_0, X)$ is of deletion type and maximal, the restriction map
	$r: \Gamma(X, T_X)\rightarrow \Gamma(X_0, T_{X|X_0})$ is surjective.
\end{proposition}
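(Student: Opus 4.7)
The plan is to combine the normal bundle decomposition of Proposition \ref{decomp} with an equivariant evaluation-at-a-point argument, working at the level of $G_0$-modules rather than invoking Borel--Weil--Bott in full generality.

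First I would take the tangent bundle exact sequence
\[ 0 \to T_{X_0} \to T_X|_{X_0} \to N_{X_0|X} \to 0 \]
of $G_0$-equivariant holomorphic vector bundles on $X_0$, and note that $H^1(X_0, T_{X_0}) = 0$ by the rigidity of irreducible compact Hermitian symmetric spaces. Taking global sections and using semisimplicity of finite-dimensional $G_0$-representations yields a $G_0$-module splitting $\Gamma(X_0, T_X|_{X_0}) \cong \mathfrak{g}_0 \oplus \Gamma(X_0, N_{X_0|X})$. By Proposition \ref{decomp} and the Borel--Weil theorem, the second summand decomposes further as $V_1 \oplus V_2$, with $V_1 := \Gamma(X_0, \mathcal{O}_{X_0}(1))$ and $V_2 := \Gamma(X_0, E)$ both irreducible $G_0$-modules.

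Next I would observe that $r : \mathfrak{g} \to \Gamma(X_0, T_X|_{X_0})$ is $G_0$-equivariant, and that its image $U$ contains $\mathfrak{g}_0$: any vector field generated by $\mathfrak{g}_0 \hookrightarrow \mathfrak{g}$ restricts on $X_0$ to the corresponding $\mathfrak{g}_0$-field. It therefore suffices to show that the induced $G_0$-submodule $U/\mathfrak{g}_0 \subset V_1 \oplus V_2$ fills the whole space. For any point $x \in X_0$, the composition
\[ \mathfrak{g} \xrightarrow{r} \Gamma(X_0, T_X|_{X_0}) \xrightarrow{\operatorname{ev}_x} T_x(X) \twoheadrightarrow N_x = \mathcal{O}_{X_0}(1)_x \oplus E_x \]
coincides with the natural surjection $\mathfrak{g} \twoheadrightarrow \mathfrak{g}/\mathfrak{p}_x \twoheadrightarrow N_x$, so $U/\mathfrak{g}_0$ evaluates onto the full fiber $N_x$.

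The remaining step is a Schur-lemma argument on the $G_0$-submodule $U/\mathfrak{g}_0 \subset V_1 \oplus V_2$. When $V_1 \not\cong V_2$, Schur forces $U/\mathfrak{g}_0 = (U/\mathfrak{g}_0 \cap V_1) \oplus (U/\mathfrak{g}_0 \cap V_2)$ with each intersection equal to $0$ or the full $V_i$ by irreducibility, and surjectivity of the composition onto $N_x$ excludes the trivial alternatives. The main obstacle I anticipate is the degenerate case $V_1 \cong V_2$, which does occur, e.g.\ for $(Q^3, Q^5)$ with $E \cong \mathcal{O}_{X_0}(1)$. In that case $G_0$-submodules of $V_1 \oplus V_2 \cong V_1 \otimes \mathbb{C}^2$ have the form $V_1 \otimes L$ for subspaces $L \subset \mathbb{C}^2$, and fiberwise surjectivity $V_1|_x \otimes L \twoheadrightarrow V_1|_x \otimes \mathbb{C}^2$ at $x$ again forces $L = \mathbb{C}^2$. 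Either way $U = \Gamma(X_0, T_X|_{X_0})$, establishing the surjectivity of $r$.
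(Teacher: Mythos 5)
Your argument is correct and is essentially the paper's proof: both reduce to surjectivity of the composition $\Gamma(X,T_X)\rightarrow\Gamma(X_0,T_{X|X_0})\rightarrow\Gamma(X_0,N_{X_0|X})$, decompose the target into irreducible $G_0$-modules via Proposition \ref{decomp} and Borel--Weil, and conclude with Schur's lemma. The only differences are that you make the non-triviality of the two projections precise through the pointwise evaluation $\mathfrak{g}\twoheadrightarrow\mathfrak{g}/\mathfrak{p}_x\twoheadrightarrow N_x$ (where the paper asserts the existence of suitable one-parameter families $\gamma_t\subset G$), and that you treat the degenerate case $\mathcal{E}\cong\mathcal{O}_{X_0}(1)$ directly via the isotypic decomposition $V_1\otimes\mathbb{C}^2$, whereas the paper sidesteps it by excluding hyperquadrics at the outset and citing \cite{MR4028278} for that case.
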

 \begin{proof}
 The proof is similar to the proof in \cite{MR4028278}. As for hyperquadric case the conclusion is known in \cite{MR4028278}, we assume that $X$ is not a hyperquadric.
 It suffices to prove that $\Gamma(X, T_X)\rightarrow \Gamma(X_0, T_{X|X_0})\rightarrow \Gamma(X_0, N_{X_0|X})$ is surjective. From Proposition \ref{decomp} we know $N_{X_0|X}$ is a direct sum of $\mathcal{O}_{X_0}(1)$ and a $P_0$-homogeneous bundle, call it $\mathcal{E}$ . Since $X$ is not a hyperquadric, these two factors are not isomorphic to each other. Then from the Borel-Weil-Bott theorem, $\Gamma(X_0, N_{X_0|X})=\Gamma(X_0, \mathcal{O}_{X_0}(1))\oplus \Gamma(X_0, \mathcal{E})$ is a direct sum of non-isomorphic irreducible $G_0$-representation spaces.
 
 By the transitivity of $G$ on $X=G/P$, there exists a holomorphic one-parameter family $\{\gamma_t, t\in \Delta\}\subset G$ such that $\gamma_0=id_{G_0}$ and such that the projection of $\frac{d\gamma_t}{dt}|_{t=0} $ to $\Gamma(X_0, \mathcal{O}_{X_0}(1))$ is non-trivial. Also we can find such a family for $\Gamma(X_0, \mathcal{E})$. Then by Schur Lemma, since $\mathcal{E}$ and $\mathcal{O}_{X_0}(1)$ are not isomorphic, the restriction map is surjective.
 \end{proof}

\section{Counterexample violating the Condition (F)}\label{counterexample}

In this section we give a counterexample which violates the Fitting condition. We consider the embedding of $\{pt\}\cup \mathbb{P}^1$ in $G(2,3)\hookrightarrow \mathbb{P}^9$ as a linear section by $\mathbb{P}^2$ and $\mathbb{P}^1$ is embedded in $G(2,3)$ as a line. However in this case such linear sections can not be translated into each other by $Aut(G(2,3))=\mathbb{P}SL(5,\mathbb{C})$. Write the Pl\"ucker embedding as $p([\mathbb{C}u+\mathbb{C}v])=[u\wedge v]\in \mathbb{P}(\bigwedge^2\mathbb{C}^5)\cong \mathbb{P}^9 $ for  $\mathbb{C}u+\mathbb{C}v\subset \mathbb{C}^5$. Then the image is the projectivization of all decomposable vectors in $\bigwedge \mathbb{C}^5$. Let $e_1,\cdots, \
c_5$ be the basis of $\mathbb{C}^5$. We fix the line to be $\ell=\{[e_1\wedge (te_2+se_3)], [t,s]\in \mathbb{P}^1\}$. Standard choice of the point whose union with the line is a linear section of $G(2,3)$ is $[e_4\wedge e_5]$. We know the subgroup in $G$ stabilizes $\ell$ is a parabolic subgroup $Q$ associate to two nodes in the Dynkin diagram of $\mathfrak{g}=Lie(G)$.   We claim that there exists a point $a\in G(2,3)$ such that $\{a\}\cup \mathbb{P}^1$ is a linear section while $a$ can not be translated to $[e_4\wedge e_5]$ by $Q$. We know $Q$ can be expressed as

\[
\{C,C=\begin{bmatrix}
* & 0 & 0 &0 &0 \\
* & * & * &0 &0 \\
* & * & * &0 &0 \\
* & * & * &* &* \\
* & * & * &* &* \\
\end{bmatrix}\in \mathbb{P}SL(5,\mathbb{C})\}
\]
Then the $Q$-orbit on $[e_4\wedge e_5]$ is 
$\mathbb{A}=\{[(\sum_{1\leq i\leq 3}C_{4i}e_i+e_4) \wedge (\sum_{1
	\leq j\leq 3}C_{5j}e_j+ e_5)], C\in 
Q\}$, which is an affine cell in $G(2,3)$. 
We know the boundary divisor $\mathbb{D}=G(2,3)\backslash \mathbb{A}$ which is the hyperplane section of $G(2,3)$ given by $\{x_{45}=0\}\subset \mathbb{P}\bigwedge^2\mathbb{C}^5$ where $x_{ij}$ is the coordinate corresponding to $e_i\wedge e_j$. Let $N_2=\mathbb{P}(\mathbb{C}\{e_1\wedge e_2, e_2\wedge e_3, e_1\wedge e_3 \})(\cong \mathbb{P}^2)$. Then $\mathbb{D}=\{\text{lines in}\ G(2,3) \ \text{passing} \ N_2\}$. We know $\ell \subset N_2$, we then take a point $b\in \mathbb{D}\backslash \{\text{lines in}\ G(2,3) \ \text{passing} \ \ell\}$. We claim that $span<b,\ell>\cap G(2,3)=\{b\}\cap \ell$. If not, there is a point $d\in \ell$ such that $\overline{bd}\cap G(2,3)$ is at least 3 points, while $G(2,3)$ is defined by quadratic polynomials and $\overline{bd}\nsubseteq G(2,3)$, a contradiction. Hence we find such a $\mathbb{P}^2=span<b,\ell>$ whose intersection with $G(2,3)$ is a union of the point $b$ and the line $\ell$. However $b\notin \mathbb{A}$ is not in the $Q$-orbit on $e_4\wedge e_5$. Hence the embedding of $\{pt\}\cup \mathbb{P}^1$ does not satisfy the Fitting condition.
	\bibliographystyle{alpha}   
	\bibliography{research}
\end{document}